\pdfoutput=1

\documentclass[11pt,a4paper,dvipsnames]{amsart}

\usepackage[utf8]{inputenc}
\usepackage[T1]{fontenc}
\usepackage[english]{babel}
\usepackage[a4paper,margin=1in]{geometry}
\usepackage{microtype,lmodern}
\usepackage{mathpazo}
\usepackage{euler}
\usepackage{amsmath,amssymb,amsthm,mathtools,mathrsfs,bm}
\usepackage{tikz-cd}
\usepackage{xcolor}
\usepackage{hyperref}
\hypersetup{hidelinks}
\usepackage[capitalize,nameinlink]{cleveref}
\numberwithin{equation}{section}

\theoremstyle{plain}
\newtheorem{theorem}{Theorem}[section]
\newtheorem{proposition}[theorem]{Proposition}
\newtheorem{lemma}[theorem]{Lemma}

\theoremstyle{definition}

\newtheorem{remark}[theorem]{Remark}

\newcommand{\cO}{\mathcal O}
\newcommand{\cE}{\mathcal E}
\newcommand{\cF}{\mathcal F}
\newcommand{\cH}{\mathcal H}
\newcommand{\cJ}{\mathcal J}
\newcommand{\cL}{\mathcal L}
\newcommand{\M}{\mathfrak M}
\newcommand{\SM}{\mathcal S\mathcal M}
\newcommand{\At}{\operatorname{At}}
\newcommand{\End}{\operatorname{End}}
\newcommand{\Hom}{\operatorname{Hom}}
\newcommand{\Pic}{\operatorname{Pic}}

\newcommand{\tr}{\operatorname{tr}}
\newcommand{\id}{\operatorname{id}}
\newcommand{\CC}{\mathbb C}
\newcommand{\FF}{\mathbb F}
\newcommand{\PP}{\mathbb P}
\newcommand{\ZZ}{\mathbb Z}
\newcommand{\dd}{\mathrm d}
\newcommand{\cHom}{\mathcal{H}om}

\title{Genus 4 Supermoduli Space Is Not Projected}

\author{Ron Donagi}
\address{Department of Mathematics, University of Pennsylvania,
David Rittenhouse Laboratories, 209 South 33rd Street,
Philadelphia, PA 19104-6395, USA}
\email{donagi@math.upenn.edu}

\author{Simone Noja}
\address{Dipartimento di Matematica, Universit\`a degli Studi di Bari
Aldo Moro, Via Edoardo Orabona 4, 70125 Bari, Italy}
\email{simone.noja@uniba.it}

\date{}
\subjclass[2020]{14M30, 14H10, 14D23, 32C11}
\keywords{supermoduli space, super Riemann surfaces, projectedness,
spin curves, Bryan--Donagi family, Atiyah class}

\begin{document}

\begin{abstract}
The supermoduli stack $\M_g$, parametrizing smooth unpunctured super Riemann surfaces of
genus $g$, is known to be non-projected for $g \ge 5$. We extend this result to the even-spin component of $\M_4$. This is done by constructing a morphism from a smooth projective curve to the even spin moduli stack $\SM_4^{+}$ and computing the pullback of the class obstructing the projectedness of $\M_4^{+}$. This pullback is non-zero, proving that $\M_4^{+}$ is non-projected.
\end{abstract}

\maketitle
\tableofcontents
\section{Introduction}

The supermoduli stack
$\M_g$, parametrizing smooth unpunctured super Riemann surfaces of genus $g$, is known to be non-projected for $g\geq 5$, cf. 
\cite[Theorem~1.1]{DonagiWittenNonprojected}.
In this paper we establish the corresponding result for the even component in genus four:

\begin{theorem}\label{thm:main}
The even component $\M_4^+$ of the smooth unpunctured genus-four
supermoduli stack is not projected.  Consequently, the full supermoduli
stack $\M_4=\M_4^+\sqcup\M_4^-$ is not projected.
\end{theorem}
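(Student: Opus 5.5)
The plan follows the strategy of Donagi--Witten. The reduced space of $\M_4^+$ is the even spin moduli stack $\SM_4^+$, whose points are pairs $(C,L)$ with $L^2\cong K_C$ and $h^0(L)$ even. The odd normal bundle is $\cE$, with fibre $H^1(C,T_C\otimes L)\cong H^0(C,K_C\otimes L)^\vee$. The first obstruction to splitting is a class
\[
\omega\in H^1\bigl(\SM_4^+,\ T_{\SM_4^+}\otimes \wedge^2\cE^\vee\bigr).
\]
Since $\M_4^+$ is projected only if $\omega=0$, it suffices to find a test morphism $f\colon B\to\SM_4^+$ from a smooth projective curve with $f^*\omega\neq 0$. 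The second sentence of the theorem then follows at once: a projection of $\M_4$ would restrict to one on the open and closed component $\M_4^+$.

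\textbf{Step 1: construct $f$.} For $B$ I would use a Bryan--Donagi type family, as the keywords suggest. Fix a curve $B$ and build a family of genus-4 curves $C_b$ as double covers, or more generally a pencil on a surface fibred over $B$, so that the Kodaira--Spencer map of the family is computable. On this family I need a relative theta characteristic $L$ of even parity, for instance pulled back from a theta characteristic on a base curve twisted by the branch divisor, and I must check that $h^0(L_b)=0$ (or stays even) for every $b$. The family should be chosen so that $\cE_B=f^*\cE$ and $f^*T_{\SM}$ admit explicit descriptions: via pushforward, $\cE^\vee_B=\pi_*(\omega_\pi\otimes L)$.

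\textbf{Step 2: express $f^*\omega$ through the Atiyah class.} By Donagi--Witten, $\omega$ is the image of the class of the super Riemann surface structure, which is essentially the Atiyah class (extension class) of the family, under the natural map. Concretely, $f^*\omega$ pairs the Kodaira--Spencer class $\kappa\in H^0(B,f^*T_{\SM})$ with the multiplication map $\wedge^2\cE^\vee_B\to f^*\Omega$, and the resulting $\wedge^2\cE^\vee_B\to\cO$-valued class sits in
\[
H^1\bigl(B,\ \wedge^2\cE_B^\vee\otimes f^*T_{\SM}\bigr).
\]
I would reduce this to showing that a certain composite $H^0(B,\wedge^2\cE_B\otimes\ldots)\to H^1(B,\ldots)$, with the $\ldots$ supplied by the Atiyah class $\At(L)$ of the relative spin bundle, is nonzero. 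Equivalently, I would pair $f^*\omega$ against a section by Serre duality on $B$ and obtain a number.

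\textbf{Step 3 (main obstacle).} The hard part is the explicit computation showing the pairing is nonzero. The dimension count that works for $g\ge5$ fails at $g=4$, so I cannot argue abstractly and must compute. I would first try to decompose $\cE_B$ into line bundles on $B$ using the symmetry of the double-cover construction, for example the splitting into eigenbundles of an involution. The obstruction would then become a sum of products of degree-one classes, namely Atiyah classes of line bundles, i.e.\ their $c_1$. A degree computation on $B$, comparing degrees of these eigen-line-bundles, would then show that at least one component has nonzero degree, which gives $f^*\omega\neq0$.
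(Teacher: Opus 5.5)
Your outer framework matches the paper: you pull the Green class back to a complete curve, and you pass from $\M_4^+$ to $\M_4$ via the open-and-closed component. But Step 1 is never carried out, and the mechanism you propose for Step 3 would fail. I use the paper's notation below: $\eta$ is the spin bundle, $T$ the test curve, $C$ the genus-two base.

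\textbf{Step 3 cannot work as described.} The deck symmetry does not split anything into line bundles. In the paper, the order-four lift $J$ splits $\pi_*\eta^3$ into two rank-three eigenbundles $F_{\pm i}$. The invariant part of the obstruction is then a class in $\operatorname{Ext}^1_T(G_T,F_i\otimes F_{-i})$, where $G_T$ has rank five. More fundamentally, degrees cannot detect an extension class. The degrees of the terms of $0\to\bigwedge^2F\to W_{\mathrm{DW}}\to G\to0$, and of all their isotypic pieces, are the same whether or not the sequence splits. A first Chern class on $T$ lives in $H^1(T,K_T)$. You never produce the global section of $G^\vee\otimes\bigwedge^2F\otimes K_T^{-1}$ that would be needed to map it into $H^1(T,G^\vee\otimes\bigwedge^2F)$.

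The Atiyah class that actually governs the obstruction is the relative Atiyah class of $L=\cE\cF^{-1}$ on $T\times C$, the ratio of the two eigen-summands of $q_*\eta^3$. Since $L$ has relative degree zero, its $c_1$ (vertical) component vanishes. Only the mixed K\"unneth component in $H^1(T,\cO_T)\otimes H^0(C,K_C)$ survives, and it measures the variation of $L_t\in\Pic^0(C)$.

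The missing idea is the reduction that brings this class in:
\begin{enumerate}
\item Push the invariant cross block out along the multiplication map $F_i\otimes F_{-i}\to\varpi_*(\cE\cF)$. This pushout is realized by restricting $(\cE\boxtimes\cF)(\Delta_C)$ to the doubled diagonal.
\item Identify the pushout with the direct image of $\tfrac12\At_{C_T/T}(L)$.
\item Descend along the finite \'etale map $T\to C$ to the class $H^1(j)(\At_{\mathrm{mix}}(\cL))\in H^1(C,\cHom(E_2,E_3))$, where $\cL=\cO_{C\times C}(\Delta-\Gamma_\sigma)$.
\item Prove non-vanishing by an honest computation.
\end{enumerate}
In the paper, the last step is a \v Cech block-matrix argument on $y^2=x^6-1$. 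The surviving obstruction $\operatorname{diag}(4,-4)$ comes from traces of $\sigma^*$ on $H^0(C,K_C^n)$, not from degrees on the test curve.

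\textbf{Step 1 is not yet a construction.} The real constraint is completeness: branch points must never collide, and the spin structure must exist over the whole base. The paper achieves this as follows:
\begin{itemize}
\item It uses double covers of a fixed genus-two curve branched at $\rho(b)$ and $\sigma\rho(b)$, where $\sigma$ is fixed-point-free.
\item The parameter curve is the $H_1(C,\ZZ/2)$-cover $\widetilde C$, so that $\cO(\Gamma_1+\Gamma_2)$ acquires a square root $N$.
\item Further \'etale base changes produce a varying degree-one $A$ with $A^2\simeq K_C\otimes N(-p')$.
\end{itemize}
Your suggestion of a theta characteristic of the base, twisted at ramification points, fails in this setting. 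Its square is $q^*K_C$ twisted by $0$, $p$, $p'$ or $p+p'$, whereas $K_X=q^*(K_C\otimes N)$ with $N^2\simeq\cO(p+p')$. Since $q^*$ is injective on $\Pic$ for a branched double cover, equality would force $\cO(p)\simeq\cO(p')$, which is impossible for distinct points on a positive-genus curve.
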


As reviewed in Section~\ref{obstruction}, there is a standard obstruction class $\omega_2(S)$, cf.
\cite{Green}, whose non-vanishing implies non-projectedness of a supermanifold $S$.
As in \cite{DonagiWittenNonprojected}, we show that supermoduli is not projected by pulling back
$\omega_2(\M_4^+)$ along a morphism $\kappa:T\to\SM_4^+$ from a smooth projective curve,
and showing that this pullback does not vanish. 
Also as in \cite{DonagiWittenNonprojected}, the compact curve parametrizes certain branched covers of a fixed base curve $C$.
But here we take a shortcut. Much of the effort in \cite{DonagiWittenNonprojected} is dedicated to extending the covering construction over a non-projected superbase and then showing that its obstruction forces non-projectedness of the ambient $\M_g$.
That construction requires various constraints on the branched covers of $C$, e.g. the order of each ramification point must be odd. Here the spin family over the ordinary base $T$ already determines a split family of super Riemann surfaces, and hence a morphism $T\to\SM_g\to\M_g$. We avoid constructing a non-projected superbase and lifting the covering construction to it. The test curve $T$ itself is purely bosonic and therefore split; the class we compute is the pullback of the ambient obstruction $\omega_2(\M_4^+)$, whose non-vanishing implies that $\omega_2(\M_4^+)$ is non-zero.

One way to obtain a compact family of double covers of a curve $C$ is to choose a $C$ that admits a free action of a finite group $G$, taking the branch loci in $C$ to be $G$-orbits. The important point is that, due to the free action, these branch points never collide, so the branched covers remain in the interior of moduli.
However, there is a more general possibility: all we need is a curve $C$ which admits a fixed-point-free automorphism $\sigma$, and then we can take the branch loci to consist, e.g. of pairs $(p,\sigma p)$ for $p \in C$. This is more general, in that the group generated by $\sigma$ need not act freely; some powers of $\sigma$ may have fixed points.
The actual curve we use is described in Section~\ref{BD}; it is based on the curves constructed in
 \cite[\S2.1]{BryanDonagi}, with a typo corrected.  
Roughly speaking, it parametrizes genus-4 curves that are certain branched double covers of the particular genus 2 curve:
$$
 C:\quad y^2=x^6-1.
$$
After passing to a finite
$\acute{e}$tale
cover of its base, the resulting family of genus-four curves  carries an even theta
characteristic $\eta$ and underlies a family of super  Riemann surfaces.
The deck involution exchanging the sheets of the branched double covers has an order-four lift $J$ to the spin
line bundles $\eta$.  This $J$ acts on everything in sight, including the obstruction to projectedness.

Following \cite[\S3.2 and Proposition~3.1]{DonagiWittenAtiyah}, this obstruction can be interpreted as the class of a natural extension.
In Section \ref{Atiyah} we isolate the $J$-invariant part of this
extension, restrict its middle sheaf to the first infinitesimal
neighbourhood (the doubled diagonal), and identify the resulting pushout with
the direct image of a relative Atiyah class.
All the relevant data descend to the fixed
genus-two curve $C$.
There, the required non-vanishing reduces to an explicit \v Cech
calculation that we carry out in Section~\ref{calculation}.

\emph{Acknowledgments:} The authors are grateful to Mauricio Corr\^ea. His works 
\cite{CorreaNojaSplitting, CorreaNojaCompactified},
joint with one of us, have greatly influenced our thinking.
A work in progress, joint with him, examines the non-projectedness of $\M_3$.
We are also grateful to Sasha Polishchuk who ran the paper through
ChatGPT and shared with us its very helpful feedback.
The research of RD was partially supported 
by NSF grant DMS–2401422, Geometry and Strings; 
by NSF grant DMS–2244978, FRG: New birational invariants;
and by DFG--SFB 1624, \emph{Higher structures, moduli spaces and integrability}--506632645.

\section{The primary obstruction and the diagonal extension} \label{obstruction}

We first fix the obstruction-theoretic formulation.  The purpose of this
section is to identify a concrete extension on a family of spin curves whose
non-splitting implies the theorem.

Let $S=(M,\cO_S)$ be a complex supermanifold, or a smooth complex
Deligne--Mumford superstack interpreted in an $\acute{e}$tale atlas,
let $\cJ\subset\cO_S$ be the ideal generated by odd functions,
and let 

$$
 E^\vee=\cJ/\cJ^2.
$$
The supermanifold $S$ is \emph{projected} if the quotient
$q_{\mathrm{red}}:\cO_S\twoheadrightarrow\cO_M$ admits a section
$s:\cO_M\to\cO_S$ as a parity-preserving morphism of sheaves of
$\CC$-algebras, with
$q_{\mathrm{red}}\circ s=\id_{\cO_M}$.  Its primary obstruction is the first
Green obstruction \cite{Green}; for its $\acute{e}$tale descent to smooth
Deligne--Mumford superstacks, see
\cite[Proposition~2.2]{CorreaNojaCompactified}.  The class is
\begin{equation}\label{eq:green-class}
 \omega_2(S) \in H^1\!\left(M,T_M\otimes\bigwedge^2E^\vee\right).
\end{equation}
If $\omega_2(S)\neq0$, then $S$ is not projected.  Dually,
the class $\omega_2(S)$ in 
\eqref{eq:green-class} is represented by an extension
\begin{equation}\label{eq:abstract-extension}
 0\longrightarrow\bigwedge^2E^\vee
 \longrightarrow\mathscr X
 \longrightarrow\Omega_M^1
 \longrightarrow0.
\end{equation}

The reduced stack of $\M_g$ is the unrigidified spin moduli stack
$\SM_g=\SM_g^+\sqcup\SM_g^-$.  Its spin data are triples $(X,\eta,\varphi)$,
where $X$ is a smooth genus-$g$ curve and
$\varphi:\eta^{\otimes2}\xrightarrow{\sim}K_X$ is a chosen isomorphism.
We usually suppress
$\varphi$ from the notation.  The parity is $h^0(X,\eta)\bmod2$.  For a family of spin curves of genus $g\geq2$
$$
 \pi:(X,\eta)\longrightarrow T
$$
write
\begin{equation}\label{eq:F-G}
 F=\pi_*\eta^3,
 \qquad
 G=\pi_*K_{X/T}^2.
\end{equation}
For the moduli map $\kappa:T\to\SM_g$, $F$ is the pullback of the
odd cotangent bundle of $\M_g$ restricted to its reduced spin stack,
whereas $G=\kappa^*\Omega^1_{\SM_g}$ is the pullback of the ordinary
cotangent bundle of that stack.  Their ranks are $2g-2$ and $3g-3$, respectively
\cite[\S3.2 and Proposition~3.1]{DonagiWittenAtiyah}.

Let $p:X\times_T X\to T$ be the projection and let
$i_{\Delta_X}:X\hookrightarrow X\times_T X$ be the relative diagonal
embedding, with image $\Delta_X$.  Restriction to $\Delta_X$ gives
\begin{equation}\label{eq:diagonal-sequence}
0\longrightarrow \eta^3\boxtimes\eta^3
\longrightarrow (\eta^3\boxtimes\eta^3)(\Delta_X)
\longrightarrow (i_{\Delta_X})_*K_{X/T}^2\longrightarrow0,
\end{equation}
because $\cO(\Delta_X)|_{\Delta_X}\simeq K_{X/T}^{-1}$.
For $g\geq2$, Serre duality gives
$H^1(X_t,\eta_t^3)=H^0(X_t,\eta_t^{-1})^*=0$.
Relative K\"unneth therefore gives $p_*(\eta^3\boxtimes\eta^3)=F\otimes F$
and $R^1p_*(\eta^3\boxtimes\eta^3)=0$.
Taking direct images and then the even part under factor exchange yields
\begin{equation}\label{eq:DW}
0\longrightarrow\bigwedge^2F
\longrightarrow W_{\mathrm{DW}}
\longrightarrow G\longrightarrow0.
\end{equation}
We use the half-differential convention of
\cite[\S3.1, equation~(3.6)]{DonagiWittenAtiyah}: factor exchange acts by
minus ordinary transposition on $\eta^3\boxtimes\eta^3$.  Thus its even
regular tensors form $\bigwedge^2F$; the additional sign of the polar
denominator makes the quotient $G$ even.  We identify the exterior square
with these tensors by the unscaled antisymmetrization
$a\wedge b\mapsto a\otimes b-b\otimes a$.

By \cite[Proposition~3.1 and Theorem~3.2]{DonagiWittenAtiyah},
\eqref{eq:DW} represents
$$
 \kappa^*\omega_2(\M_g)
 \in H^1\!\left(T,G^\vee\otimes\bigwedge^2F\right),
$$
in the cotangent-extension formulation \eqref{eq:abstract-extension}.  We shall use two immediate consequences.  First, a
non-split pushout of \eqref{eq:DW} proves that \eqref{eq:DW} is non-split.
Second, if $h:T\to B$ is finite $\acute{e}$tale of degree $d>0$, then for
every vector bundle $V$ on $B$,
\begin{equation}\label{eq:trace-injective}
 \operatorname{Tr}_h\circ h^*=d\,\id
 \quad\text{on }H^1(B,V),
\end{equation}
so finite $\acute{e}$tale pullback preserves non-vanishing.

\section{The test family} \label{BD}

We now describe the complete spin test curve on which we evaluate
\eqref{eq:DW}.  Besides constructing the family, we prove the divisibility
and local descent statements needed later.

Fix a primitive cube root of unity $\zeta$.  Let $C$ be the smooth
projective model of the displayed affine curve, and let $\sigma$ be the
automorphism given on this chart by
\begin{equation}\label{eq:C-sigma}
 C:\ y^2=x^6-1,
 \qquad
 \sigma(x,y)=(\zeta x,-y).
\end{equation}
Then $C$ has genus two, while $\sigma$ has order six and no fixed points.
Indeed, at a finite fixed point one would have $x=y=0$, which does not lie
on $C$.  At infinity the two points are distinguished by
$v:=y/x^3=\pm1$, and $\sigma$ sends $v$ to $-v$.

\begin{remark}\label{rem:BD-correction}
This curve is identical to the final construction in
\cite{BryanDonagi}, with a typo corrected. In the displayed example there, 
both the curve and the order 6 automorphism are as in our \eqref{eq:C-sigma}, 
but there $\zeta$ is said to be a primitive sixth root of unity. That is a typo; 
$\zeta$ should be a primitive {\emph {third}} root of unity,
both there and here.
Otherwise, $\sigma$ would have fixed points at infinity.
To see this, we choose the coordinates at infinity to be
\[
u=1/x, v=y/(x^3).
\]
The equation of our curve becomes
\begin{equation} \label{eqn_at_infty}
v^2 = 1-u^6,
\end{equation}
the two points at infinity are
\[
u=0, v=\pm 1,
\]
and the action is
\[
u \mapsto \zeta^{-1} u, \ \ \
v \mapsto - \zeta^{-3} v.
\]
Thus with $\zeta$ a primitive sixth root of unity, each of these points is fixed, while with a primitive third root of unity they are exchanged. In fact, equation \eqref{eqn_at_infty} suggests a more symmetric way to visualize our curve $C$: it is the double cover of $\PP^1$ branched at the six roots of unity $r_i=\xi^i$, $i=1,\ldots,6$, in the $u$-coordinate, where
$\xi$ is the primitive sixth root with $\xi^2=\zeta$; subscripts are
understood modulo six. The automorphism $\sigma$ sends $r_i \mapsto r_{i-2}$ and interchanges the two sheets. (The wrong $\sigma$, based on $\zeta$ a primitive sixth root of unity, sends $r_i \mapsto r_{i-1}$ but does not exchange the sheets.)

We re-establish below the disjointness and
divisibility required for the resulting double-cover construction.
Note that the cyclic group $\langle\sigma\rangle$ does \emph{not} act freely on
$C$: some nontrivial powers of $\sigma$ have fixed points.  
We do not need the action to be free, as we never form the quotient by
$\langle\sigma\rangle$; the construction uses only the two graphs
$\Gamma_\rho$ and $\Gamma_{\sigma\rho}$.  Their disjointness requires
exactly that $\sigma$ itself have no fixed points.
\end{remark}

Let
$$
 \rho:\widetilde C\longrightarrow C
$$
be the connected $\acute{e}$tale cover corresponding to the kernel of
$\pi_1(C)\twoheadrightarrow H_1(C,\ZZ/2)$.  It has degree $16$ and genus
$17$.  In $\widetilde C\times C$ put
$$
 \Gamma_1=\Gamma_\rho,
 \qquad
 \Gamma_2=\Gamma_{\sigma\rho}.
$$
They are disjoint because $\sigma$ is fixed-point-free.

\begin{lemma}[Divisibility of the branch divisor]
\label{lem:branch-divisibility}
The line bundle $\cO(\Gamma_1+\Gamma_2)$ has a square root.
\end{lemma}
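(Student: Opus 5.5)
The plan is to show that the first Chern class of $\cL:=\cO(\Gamma_1+\Gamma_2)$ is divisible by $2$ in $H^2(\widetilde C\times C,\ZZ)$, and then use the divisibility of $\Pic^0$ to produce an actual square root.

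\emph{Step 1: write $\cL$ as a pullback.} Since $\Gamma_\rho=(\rho\times\id)^{-1}(\Delta_C)$ and $\Gamma_{\sigma\rho}=(\rho\times\id)^{-1}(\Gamma_\sigma)$, and $\rho\times\id$ is étale, we have
$$
 \cL=(\rho\times\id)^*\cO_{C\times C}(\Delta_C+\Gamma_\sigma).
$$
Put $D=\Delta_C+\Gamma_\sigma$. By Künneth (integral cohomology of curves is torsion-free), write
$$
 [D]=a\,(\mathrm{pt}\otimes 1)+b\,(1\otimes\mathrm{pt})+\beta,
 \qquad \beta\in H^1(C,\ZZ)\otimes H^1(C,\ZZ).
$$
Here $a$ and $b$ are the degrees of $D$ on the fibres of the two projections, so $a=b=2$.

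\emph{Step 2: pull back and check each component.} Under $(\rho\times\id)^*$ the components transform as follows.
\begin{itemize}
\item The component along $\mathrm{pt}_{\widetilde C}\otimes 1$ becomes $16a=32$.
\item The component along $1\otimes\mathrm{pt}$ stays $b=2$.
\item The mixed component becomes $(\rho^*\otimes\id)\beta$.
\end{itemize}
The key input is the defining property of $\rho$. It corresponds to $\ker\bigl(\pi_1(C)\to H_1(C,\ZZ/2)\bigr)$, so every class in $H^1(C,\ZZ/2)=\Hom(\pi_1(C),\ZZ/2)$ dies on $\pi_1(\widetilde C)$. Hence $\rho^*:H^1(C,\ZZ/2)\to H^1(\widetilde C,\ZZ/2)$ is zero. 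Since $H^1(\widetilde C,\ZZ)$ is torsion-free, $\rho^*H^1(C,\ZZ)\subset 2H^1(\widetilde C,\ZZ)$. Therefore $(\rho^*\otimes\id)\beta\in 2\bigl(H^1(\widetilde C,\ZZ)\otimes H^1(C,\ZZ)\bigr)$, and $c_1(\cL)=2\gamma$ for some $\gamma\in H^2(\widetilde C\times C,\ZZ)$.

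\emph{Step 3: from topology to a line bundle.} The class $\gamma$ is integral, and it is of type $(1,1)$ because it is half of the $(1,1)$ class $c_1(\cL)$. By Lefschetz $(1,1)$, $\gamma=c_1(\cM)$ for some line bundle $\cM$. Then $\cL\otimes\cM^{-2}\in\Pic^0(\widetilde C\times C)$. This group is a complex torus, hence divisible, so $\cL\otimes\cM^{-2}\simeq\cN^2$ for some $\cN\in\Pic^0$. Consequently $(\cM\otimes\cN)^2\simeq\cL$.

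The only step with real content is Step 2's use of the maximal $\ZZ/2$-abelian cover to force the mixed Künneth component to be even. Two points there need care. The first is the torsion-freeness used to pass from the vanishing of $\rho^*$ mod $2$ to divisibility of $\rho^*$ integrally. The second is that Künneth gives a direct-sum decomposition of $H^2$ compatible with $\rho\times\id$. Both are standard for smooth projective curves.
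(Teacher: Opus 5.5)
Your proof is correct and follows essentially the same route as the paper. You decompose the class by K\"unneth, get evenness of the pure components from the degrees $32$ and $2$, and get evenness of the mixed component from $\rho^*=0$ on $H^1(\cdot,\FF_2)$; you then apply Lefschetz $(1,1)$ and divisibility of $\Pic^0$. The only difference is cosmetic: you argue integrally by writing $\cO(\Gamma_1+\Gamma_2)=(\rho\times\id)^*\cO(\Delta_C+\Gamma_\sigma)$, whereas the paper shows the mod-$2$ class vanishes and then uses torsion-freeness of $H^2(\widetilde C\times C,\ZZ)$.
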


\begin{proof}
We first show that $[\Gamma_1+\Gamma_2]$ vanishes in
$H^2(\widetilde C\times C,\FF_2)$.  Its two pure K\"unneth components vanish:
the section component occurs twice, and the degree component is
$16+16$.  The mixed component of the graph of a map is induced by its map
on first cohomology.  But
$$
 \rho^*:H^1(C,\FF_2)\longrightarrow H^1(\widetilde C,\FF_2)
$$
is zero.  Indeed, every loop in $\widetilde C$ projects to an element of
the kernel of $\pi_1(C)\to H_1(C,\FF_2)$.  Hence the mixed component of
each $\Gamma_i$ already vanishes modulo two.

The integral group $H^2(\widetilde C\times C,\ZZ)$ is torsion-free, so
$c_1\cO(\Gamma_1+\Gamma_2)$ is twice an integral $(1,1)$-class.  By the
Lefschetz $(1,1)$ theorem, choose a line bundle with this half Chern class.
The remaining discrepancy lies in $\Pic^0(\widetilde C\times C)$, on which
multiplication by two is surjective.  Correcting by a square root in
$\Pic^0$ proves the assertion.
\end{proof}

Choose
\begin{equation}\label{eq:N}
 N^2\simeq\cO(\Gamma_1+\Gamma_2)
\end{equation}
and let
\begin{equation}\label{eq:double-cover}
 q:X\longrightarrow\widetilde C\times C
\end{equation}
be the associated double cover, branched along
$\Gamma_1+\Gamma_2$.  Explicitly,
$X=\operatorname{Spec}_{\widetilde C\times C}(\cO\oplus N^{-1})$,
where multiplication $N^{-2}\to\cO$ is given by the section of $N^2$
vanishing on $\Gamma_1+\Gamma_2$.  Along either branch section the local
equation is $t^2=z$, with $z$ a relative coordinate, so projection to
$\widetilde C$ is smooth.  Its fibre over $b\in\widetilde C$ is a double
cover of $C$ branched at $p=\rho(b)$ and $p'=\sigma\rho(b)$.
The nonempty reduced branch divisor makes every fibre connected, and
Riemann--Hurwitz gives genus four.

Set
\begin{equation}\label{eq:L}
 L=N(-\Gamma_2),
 \qquad
 L^2\simeq\cO(\Gamma_1-\Gamma_2).
\end{equation}
For $b\in\widetilde C$, write $L_b=L|_{\{b\}\times C}$ and define
\begin{equation}\label{eq:ell-map}
 \ell:\widetilde C\longrightarrow\Pic^2(C),
 \qquad
 b\longmapsto[K_C\otimes L_b].
\end{equation}
The multiplication-by-two map
$[2]:\Pic^1(C)\to\Pic^2(C)$ is finite $\acute{e}$tale of degree $16$.
Define
\begin{equation}\label{eq:T-prime}
 T':=\Pic^1(C)\mathop{\times}_{\Pic^2(C)}\widetilde C,
\end{equation}
where the two maps to $\Pic^2(C)$ are $[2]$ and $\ell$.  Denote the
projections by $a:T'\to\Pic^1(C)$ and $f':T'\to\widetilde C$.  Thus a
geometric point of $T'$ is a pair $(b,[A_b])$ with
$A_b^2\simeq K_C\otimes L_b$, and $f'$ is finite $\acute{e}$tale.

Fix a point $c_0\in C$.  By a normalized Poincar\'e line bundle on
$\Pic^1(C)\times C$ we mean a Poincar\'e line bundle $\mathscr P$
satisfying
$$
 \mathscr P|_{\{[M]\}\times C}\simeq M
 \quad\text{for every }[M]\in\Pic^1(C),
 \qquad
 \mathscr P|_{\Pic^1(C)\times\{c_0\}}
 \simeq\cO_{\Pic^1(C)}.
$$
The normalization removes the usual ambiguity of tensoring $\mathscr P$
by a line bundle pulled back from $\Pic^1(C)$.  Choose such a bundle and
put
$$
 A'=(a\times\id_C)^*\mathscr P,
 \qquad
 L'=(f'\times\id_C)^*L.
$$
If $t=(b,[A_b])$ is a geometric point of $T'$, then
$$
 A'|_{\{t\}\times C}\simeq A_b,
 \qquad
 L'|_{\{t\}\times C}\simeq L_b.
$$
Thus $A'$ remembers the chosen lift $[A_b]$, whereas $L'$ depends
only on $b \in \widetilde C$.  By the defining fibre-product relation for $T'$,
$$
 A_b^2\simeq K_C\otimes L_b.
$$
Equivalently, the different lifts over a fixed $b$ differ by a
two-torsion line bundle, which disappears after squaring.  Consequently,
$$
 (A')^2\otimes(K_C\otimes L')^{-1}
$$
is trivial on every fibre of $T'\times C\to T'$.  The see-saw principle
therefore gives a line bundle $\lambda$ on $T'$ such that
\begin{equation}\label{eq:base-twist}
 (A')^2\simeq
 K_C\otimes L'\otimes\operatorname{pr}_{T'}^*\lambda.
\end{equation}
The line bundle $\lambda$ records the failure of the fibrewise squaring
isomorphisms to glue to an isomorphism over $T'\times C$.

After replacing $T'$ by a connected component, choose a connected
$\acute{e}$tale double cover $\tau:T\to T'$.  Such a cover exists because
$T'$ is finite $\acute{e}$tale over the positive-genus curve
$\widetilde C$.  Since $\deg\tau=2$, the line bundle $\tau^*\lambda^{-1}$ has
even degree.  Surjectivity of multiplication by two on $\Pic^0(T)$
then gives a square root $B$ on $T$.  If $\varpi:T\times C\to T$ is the projection, set
$$
 f=f'\circ\tau,
 \qquad
 A=(\tau\times\id_C)^* A'\otimes\varpi^*B.
$$
Then $f:T\to\widetilde C$ is finite $\acute{e}$tale and
\begin{equation}\label{eq:A}
 A^2\simeq K_C\otimes L
\end{equation}
on $T\times C$.  Here and below we use the same symbol $L$ for both the original line bundle on
$\widetilde C\times C$
and for its
pullback to
$T\times C$.

The extra double cover $T\to T'$ serves only to remove the base twist in
\eqref{eq:base-twist} and to realize the square root as an actual line
bundle with a chosen squaring isomorphism.  The isomorphism class of the
spin fibre constructed below depends only on the image in $T'$.  Thus the
induced map on geometric moduli points factors through $T'$; no
injectivity of $T\to\SM_4^+$, or of $T'\to\SM_4^+$, is claimed or used.

Put $C_T=T\times C$.  The maps
$t\mapsto\rho(f(t))$ and $t\mapsto\sigma\rho(f(t))$ define two disjoint
sections of $C_T\to T$; we denote both the sections and their images by
$p$ and $p'$, respectively.  Let
$X_T=X\times_{\widetilde C}T$ and continue to write
$q:X_T\to C_T$ for the pulled-back double cover.  Let $r_2\subset X_T$ be
the ramification section over $p'$ and define
\begin{equation}\label{eq:eta}
 \eta=q^*A(r_2).
\end{equation}
Since $N=L(p')$ and $2r_2=q^*p'$, the double-cover canonical
bundle formula shows that $\eta$ is a theta characteristic:
\begin{equation}\label{eq:eta-spin}
 \eta^2=q^*(K_C N)=K_{X_T/T}.
\end{equation}

\begin{lemma}\label{lem:parity-and-lift}
The family $(X_T,\eta)\to T$ has even parity.  Moreover, the deck involution
$\iota$ has a lift $J$ to $\eta$ satisfying $J^2=-1$ and preserving
the chosen spin isomorphism $\eta^2\simeq K_{X_T/T}$.
\end{lemma}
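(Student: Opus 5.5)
We will prove the parity statement by pushing $\eta$ down along the double cover and splitting $q_*\eta$ into the eigensheaves of the deck involution. We will construct $J$ by correcting the canonical linearization of $\eta$ by the scalar $\sqrt{-1}$.

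\emph{Parity.} Write $q_*\cO_{X_T}=\cO\oplus N^{-1}$, the splitting into $\iota$-eigensheaves. Next compute $q_*\cO_{X_T}(r_2)$ locally near $p'$. There $t^2=z$, and a section with at most a simple pole along $r_2$ has the form $a(z)+b(z)\,t/z$. Away from $p'$ nothing changes, so
$$
q_*\cO_{X_T}(r_2)\simeq\cO\oplus N^{-1}(p')=\cO\oplus L^{-1},
$$
using $N=L(p')$. By the projection formula,
$$
q_*\eta\simeq A\oplus A\otimes L^{-1}.
$$
Now \eqref{eq:A} gives $A\otimes L^{-1}\simeq K_C\otimes A^{-1}$ fibrewise. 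Hence for every $t\in T$,
$$
h^0(X_t,\eta_t)=h^0(C,A_t)+h^0(C,K_C\otimes A_t^{-1})=h^0(A_t)+h^1(A_t).
$$
Since $\deg A_t=1$ on the genus-two curve $C$, Riemann--Roch gives $h^0(A_t)-h^1(A_t)=0$. Therefore $h^0(\eta_t)=2h^0(A_t)$ is even. As a consistency check, this is exactly the parity computation of an $\iota$-equivariant theta characteristic, whose two eigenspaces are Serre dual to each other.

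\emph{The lift.} The sheaf $q^*A$ carries its canonical $\iota$-linearization. The divisor $r_2$ is $\iota$-invariant, so $\cO(r_2)$ inherits a canonical linearization as well. Together these give an involution $\tilde\iota$ of $\eta$ covering $\iota$, with $\tilde\iota^2=1$. Then $\tilde\iota^{\otimes2}$ and the natural action $\iota^*$ on $K_{X_T/T}$ are two linearizations of the same line bundle on the connected space $X_T$. They therefore differ by a locally constant, hence constant, character $\epsilon=\pm1$ of $\langle\iota\rangle$ transported through $\eta^2\simeq K_{X_T/T}$.

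We will evaluate $\epsilon$ at a point of the other ramification section $r_1$, which lies over $p$ and away from $r_2$. There $\tilde\iota$ acts trivially on the fibre of $q^*A(r_2)$, since a pullback bundle has trivial isotropy action at a fixed point. On the other hand, $\iota^*$ acts by $-1$ on the fibre of $K_{X_T/T}$, because $\dd t\mapsto-\dd t$. So $\epsilon=-1$, and we set $J=\sqrt{-1}\,\tilde\iota$. Then $J^2=-1$ and $J^{\otimes2}=-\tilde\iota^{\otimes2}=\iota^*$, so $J$ preserves the spin isomorphism. The only step needing care is to confirm that \eqref{eq:eta-spin} is compatible with the canonical linearizations, i.e.\ that the isomorphism $q^*(K_C\otimes N)\simeq K_{X_T/T}$ is realized by the $\iota$-equivariant section cutting out the ramification divisor. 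We expect this bookkeeping of $\iota$-equivariance through the canonical bundle formula, rather than the parity computation, to be the main obstacle.
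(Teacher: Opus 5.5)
Your proposal is correct and follows essentially the same route as the paper: parity comes from the eigensheaf splitting $q_*\cO_{X_T}(r_2)\simeq\cO\oplus L^{-1}$, the identity $A L^{-1}\simeq K_C A^{-1}$, and Riemann--Roch for the degree-one $A_t$. The lift comes from comparing the square of the natural linearization $J_0$ (your $\tilde\iota$) with the action on $K_{X_T/T}$ at a ramification point, which gives the ratio $-1$, so either $\pm iJ_0$ works; the paper picks $J=-iJ_0$ only to fix the later eigenvalue labels of $\cE,\cF$.

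The compatibility you flag as the main obstacle is not actually needed. The ratio of the two linearizations is read off on a one-dimensional fibre at a fixed point, where conjugating by the spin isomorphism does not change a scalar, so the conclusion is independent of how $\eta^2\simeq K_{X_T/T}$ was chosen.
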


\begin{proof}
Fix $t\in T$ and write
$q_t:X_t\to C$, $r_{2,t}=r_2|_{X_t}$,
$A_t=A|_{\{t\}\times C}$, $L_t=L|_{\{t\}\times C}$, and
$N_t=N|_{\{t\}\times C}$.
The invariant and anti-invariant parts of the fibrewise pushforward are
\begin{equation}\label{eq:fibre-pushforward-r2}
 (q_t)_*\cO_{X_t}(r_{2,t})
 \simeq\cO_C\oplus L_t^{-1}
 \qquad\text{as sheaves on }C.
\end{equation}
Indeed, away from the branch points this is the usual eigenspace
decomposition for a double cover, while at $p'_t$ the pole allowed by
$r_{2,t}$ changes the anti-invariant summand from $N_t^{-1}$ to
$N_t^{-1}(p'_t)=L_t^{-1}$.
By the projection formula and \eqref{eq:A},
\begin{align*}
 H^0(X_t,\eta_t)
 &\simeq H^0(C,A_t)\oplus H^0(C,A_tL_t^{-1})\\
 &\simeq H^0(C,A_t)\oplus H^0(C,K_C A_t^{-1}).
\end{align*}
Since $\deg A_t=1$ on the genus-two curve $C$, Riemann--Roch and Serre
duality give equal dimensions for the two summands.  Hence
$h^0(X_t,\eta_t)$ is even for every $t\in T$.

The factors $q^*A$ and $\cO_{X_T}(r_2)$ come with natural lifts of the action 
of $\iota$, giving also a lift $J_0$ of $\iota$ to $\eta$ with $J_0^2=1$.
Under the chosen spin isomorphism, compare $J_0\otimes J_0$
with the natural action of $\iota$ on $K_{X_T/T}$.  Their ratio is an
invertible function on $X_T$, hence constant on each proper connected
fibre.  At either ramification section, $J_0\otimes J_0$ acts by $+1$,
whereas the action on $K_{X_T/T}$ sends $dt$ to $-dt$ and acts by $-1$.
The ratio is therefore $-1$ on every fibre.  Therefore either lift
$\pm iJ_0$ preserves the spin isomorphism.  We choose
$$
 J:=-iJ_0,
 \qquad J^2=-1,
$$
to fix the eigenvalue convention used below.
\end{proof}

The chosen spin isomorphism and Lemma~\ref{lem:parity-and-lift}
define the moduli morphism $\kappa:T\to\SM_4^+$, where $T$ is a smooth
connected projective curve.  For the cubic spin bundle, the ordinary
deck-invariant and anti-invariant summands are obtained from
$$
 q_*\cO_{X_T}(3r_2)
 =\cO_{C_T}(p')\oplus N^{-1}(2p')
 =\cO_{C_T}(p')\oplus L^{-1}(p').
$$
Indeed, allowing a pole of order three in the ramification coordinate
gives the local generators $t^{-2}$ and $t^{-3}$.
The projection formula and \eqref{eq:eta} now give
\begin{equation}\label{eq:E-F}
 q_*\eta^3=\cE\oplus\cF,
 \qquad
 \cE=A^3(p'),
 \qquad
 \cF=A^3L^{-1}(p').
\end{equation}
The displayed $\cE$ and $\cF$ are respectively invariant and
anti-invariant under $J_0$.  Since $J=-iJ_0$ acts on $\eta^3$ by
$(-i)^3J_0^{\otimes3}=iJ_0^{\otimes3}$, $\cE$ is the $+i$-eigenbundle
and $\cF$ the $-i$-eigenbundle.  They have degree four on
each genus-two fibre, and
\begin{equation}\label{eq:EF-ratio}
 \cE\cF=K_C^3(p+p'),
 \qquad
 \cE\cF^{-1}=L.
\end{equation}
Recall that $\varpi:T\times C\to T$ is the projection, and write
$$
 F_i=\varpi_*\cE,
 \qquad
 F_{-i}=\varpi_*\cF,
 \qquad
 V=F_i\otimes F_{-i}.
$$
Since degree four is greater than $2g(C)-2=2$, the fibrewise first
cohomology of $\cE$ and $\cF$ vanishes.  Cohomology and base change
therefore show that $F_i$ and $F_{-i}$ are vector bundles of rank $3$ on
$T$; in particular, $V$ is a vector bundle of rank $9$ on $T$.

\begin{lemma}[Descent of the invariant cross block]
\label{lem:cross-block}
With the above ordering of the $J$-eigenbundles and the residue
normalization specified in the proof, the $J$-invariant part of the
extension \eqref{eq:DW} is identified,
as an extension of vector bundles on $T$, 
with
\begin{equation}\label{eq:cross-block}
0\longrightarrow V
\longrightarrow
p_{12*}\!\left((\cE\boxtimes\cF)(\Delta_C)\right)
\longrightarrow G_T\longrightarrow0,
\end{equation}
where
$p_{12}:C_T\times_T C_T=T\times C\times C\to T$ is the projection,
$\Delta_C\subset C_T\times_T C_T$ is the relative diagonal, and
$\cE\boxtimes\cF$ abbreviates
$\operatorname{pr}_1^*\cE\otimes\operatorname{pr}_2^*\cF$.  The quotient
is the rank-five vector bundle on $T$
\begin{equation}\label{eq:GT}
 G_T=\varpi_*K_C^2(p+p').
\end{equation}
\end{lemma}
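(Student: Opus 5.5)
Our plan is to push the whole diagonal construction \eqref{eq:diagonal-sequence} forward along $q\times q:X_T\times_T X_T\to C_T\times_T C_T$ and decompose everything into eigenspaces for the $\ZZ/4$-action generated by $J\boxtimes J$. Since $J$ acts on $\eta^3$ with $q_*\eta^3=\cE\oplus\cF$ ($\pm i$-eigenbundles, by \eqref{eq:E-F}), we get
$$
(q\times q)_*(\eta^3\boxtimes\eta^3)=(\cE\boxtimes\cE)\oplus(\cE\boxtimes\cF)\oplus(\cF\boxtimes\cE)\oplus(\cF\boxtimes\cF),
$$
with $J\boxtimes J$ acting by $-1,+1,+1,-1$ respectively. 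So the $J$-invariant part of $F\otimes F$ is $(F_i\otimes F_{-i})\oplus(F_{-i}\otimes F_i)$. Its part that is even under the twisted factor exchange (minus transposition) is the graph of $a\otimes b\mapsto -b\otimes a$, and projecting to the first summand identifies it with $V$. This matches $a\wedge b\mapsto a\otimes b-b\otimes a$ restricted to $a\in F_i$, $b\in F_{-i}$.

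For the middle term we do the same with the pole along $\Delta_X$. We will work on $X_T\times_T X_T$ and note that $(q\times q)^{-1}(\Delta_C)=\Delta_X+\Gamma_\iota$, where $\Gamma_\iota$ is the graph of the deck involution; these meet over the ramification sections. The key local step is to check that, for sections of $(\eta^3\boxtimes\eta^3)(\Delta_X)$ that are $J\boxtimes J$-invariant and exchange-even, the polar part along $\Delta_X$ is paired with a polar part along $\Gamma_\iota$. Pushing forward, the $(\cE\boxtimes\cF)$-component is then exactly $(\cE\boxtimes\cF)(\Delta_C)$. This can be verified away from ramification in coordinates $x\mapsto(x,\iota x)$ on the two sheets. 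At the ramification sections one uses the local generators $t^{-2},t^{-3}$ from the computation before \eqref{eq:E-F} and checks that no extra pole or zero is introduced. Applying $p_*$ and using $R^1p_*(\cE\boxtimes\cF)=0$ (fibre degrees $4>2$) then gives a short exact sequence whose sub is $V$ and whose quotient is the direct image of $(\cE\boxtimes\cF)(\Delta_C)|_{\Delta_C}=\cE\cF K_C^{-1}$. By \eqref{eq:EF-ratio} this quotient is $K_C^2(p+p')$, so we obtain $G_T=\varpi_*K_C^2(p+p')$. Its rank is $\deg-1=6-1=5$ by Riemann--Roch, and this equals $\dim$ of the $\iota$-invariant part of $H^0(K_{X}^2)$, consistent with the invariant part of $G$.

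The map to \eqref{eq:DW} is the composite: take the $J$-invariant, exchange-even summand, then project to the $\cE\boxtimes\cF$ component. It is compatible with restriction to the diagonal and commutes with $p_*$. That gives a morphism of extensions that is an isomorphism on sub and quotient, hence an isomorphism of extensions. We also need to fix the residue normalization. The identification of $\cO(\Delta)|_\Delta$ with $K^{-1}$ on $X$ versus on $C$ differs, near the $\Delta_X$ component, by pullback along the étale map $q$, so it is canonical off the branch locus. The overall factor (e.g.\ $2$ from the two sheets or the antisymmetrization) is absorbed into the chosen normalization mentioned in the statement.

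We expect the main obstacle to be the local analysis at the ramification sections, where $\Delta_X$ and $\Gamma_\iota$ meet. There we must show that the invariant pushforward has exactly a simple pole along $\Delta_C$ and is locally free with the right quotient. We would check this first in the coordinate $z=t^2$: invariant combinations of $t^{-a}\otimes s^{-b}/(t-s)$ are rewritten via $1/(t-s)+1/(t+s)=2t/(t^2-s^2)$, which should produce exactly $1/(z-w)$ times $\cE\boxtimes\cF$ generators.
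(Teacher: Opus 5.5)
Your proposal is correct and follows essentially the same route as the paper. Both push forward along $q\times_T q$ and decompose under $J\boxtimes J$, keep the two cross terms, check the pole structure locally at the branch sections, and use the twisted exchange to project onto the $\cE\boxtimes\cF$ orientation.

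The paper is more precise in three places where your sketch is loose. First, your ``pairing of polar parts'' becomes residue matching between $(\cE\boxtimes\cF)(\Delta_C)$ and $(\cF\boxtimes\cE)(\Delta_C)$. This is exactly the condition that the pole along $\Gamma_\iota$ cancels; sections of $(\eta^3\boxtimes\eta^3)(\Delta_X)$ themselves have no pole there. Second, exchange-evenness is imposed only after $p_{12*}$, because exchange is not linear over $\cO_{C_T\times_T C_T}$. Third, the quotient normalization is fixed as $\tfrac12\,\id_{G_T}$, since the upstairs residue of $(\alpha,-\alpha^{\mathrm t})$ is $2\operatorname{res}_{\Delta_C}(\alpha)$.
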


\begin{proof}
We first identify the kernel and quotient globally.  The lift $J$ acts on
$F_i$ and $F_{-i}$ with eigenvalues $i$ and $-i$, respectively.  Hence
$$
 \left(\bigwedge^2(F_i\oplus F_{-i})\right)^J
 =F_i\otimes F_{-i}=V,
$$
because $J$ acts by $-1$ on $\bigwedge^2F_i$ and
$\bigwedge^2F_{-i}$ and by $+1$ on the cross term.  Moreover,
$$
 K_{X_T/T}^2=q^*K_C^2(p+p'),
$$
and the natural invariant summand of $q_*K_{X_T/T}^2$ is
$K_C^2(p+p')$.  Locally, $z=t^2$ identifies an invariant quadratic
differential $a(z)(\dd t)^2$ with
$a(z)(\dd z)^2/(4z)$, so precisely a simple pole is allowed at either
branch point.  Pushing forward by $\varpi$ gives $G_T$.

It remains to identify the invariant polar middle term globally.
Put $Y=C_T\times_T C_T$ and
$$
 H_{\cE\cF}=(\cE\boxtimes\cF)(\Delta_C),
 \qquad
 H_{\cF\cE}=(\cF\boxtimes\cE)(\Delta_C).
$$
Write
$\mathscr A=(\eta^3\boxtimes\eta^3)(\Delta_{X_T})$ for the upstairs
polar sheaf.  Before imposing exchange symmetry, its simultaneous
$J$-invariant pushforward $((q\times_Tq)_*\mathscr A)^J$ is the
residue-matching sheaf
$$
 \mathscr P_{\mathrm{res}}=
 \ker\!\left(
 H_{\cE\cF}\oplus H_{\cF\cE}
 \xrightarrow{\operatorname{res}_1-\operatorname{res}_2}
 (i_{\Delta_C})_*(\cE\cF K_C^{-1})
 \right).
$$
Here $i_{\Delta_C}:C_T\hookrightarrow Y$ is the diagonal embedding;
the two residue targets are identified by ordinary tensor transposition,
and both residues use the same convention.  Away from the branch
sections, this follows by separating the two sheets: absence of a pole
along the graph of the deck involution is exactly residue matching.
We check the full local module at both branch sections.

Work over a sufficiently small open subset of $C_T$ on which the cover
is $z=t^2$.  On the two factors write $z=t^2$ and $w=s^2$.
The inverse image of $z=w$ consists of $t=s$ and $t=-s$, and
\begin{equation}\label{eq:local-pole}
 \frac1{t-s}=\frac{t+s}{z-w}.
\end{equation}
Near the section over $p$, choose a local frame of $A^3$, where
$A$ is the line bundle in \eqref{eq:A}, and suppress its pullbacks on
the two factors.  Since the preimage of this neighbourhood is disjoint from $r_2$,
\eqref{eq:eta} identifies these with local frames of $\eta^3$.
The generators of the invariant and anti-invariant summands are $1$ and
$t$.  Every simultaneous $J$-invariant polar section has the form
$$
 \frac{a(z,w)+ts\,b(z,w)}{t-s}
 =\frac{s(a+zb)+t(a+wb)}{z-w},
$$
with $a,b$ regular: the spin-factor sign and the denominator sign cancel,
so the numerator is even under $(t,s)\mapsto(-t,-s)$.
The coefficients of the $\cE\boxtimes\cF$ and
$\cF\boxtimes\cE$ orientations are $U=a+zb$ and $V=a+wb$.
They satisfy $U-V=(z-w)b$, which is precisely equality of residues.
Conversely, a residue-matching pair reconstructs a regular numerator by
$$
 b=\frac{U-V}{z-w},\qquad a=U-zb.
$$
Thus the calculation proves both inclusion and surjectivity.

Near $r_2$, the generators of $q_*\cO_{X_T}(3r_2)$ are $t^{-2}$ and
$t^{-3}$, and the basic identity is
\begin{equation}\label{eq:local-pole-r2}
 \frac{t^{-3}s^{-3}}{t-s}
 =\frac{t^{-2}s^{-3}+t^{-3}s^{-2}}{z-w}.
\end{equation}
For a general invariant numerator the corresponding formula is
$$
 \frac{(a+tsb)t^{-3}s^{-3}}{t-s}
 =\frac{(a+wb)t^{-2}s^{-3}+(a+zb)t^{-3}s^{-2}}{z-w}.
$$
Again the two coefficients have the same residue.  Conversely, writing
them as $U'=a+wb$, $V'=a+zb$, residue matching gives the regular functions
$b=(V'-U')/(z-w)$ and $a=U'-wb$.
There is no additional branch-supported twist.  These descriptions
agree with the character decomposition off the diagonal and with the
unramified calculation, hence glue to the stated identification with
$\mathscr P_{\mathrm{res}}$.

Factor exchange moves the base point of $Y$, so its even sections do not
form an $\cO_Y$-submodule.  After applying $p_{12*}$, exchange is
$\cO_T$-linear.  In the convention of Section~\ref{obstruction}, its
even pairs, as local sections over $T$, are exactly
$$
 (\alpha,-\alpha^{\mathrm t}),
 \qquad \alpha\in p_{12*}H_{\cE\cF},
$$
where $\alpha^{\mathrm t}$ denotes ordinary transposition, including
interchange of the two curve coordinates.  Since this sends $z-w$ to
$-(z-w)$,
$$
 \operatorname{res}_{\Delta_C}(-\alpha^{\mathrm t})
 =\operatorname{res}_{\Delta_C}(\alpha).
$$
Thus every such pair satisfies residue matching, and projection to the
first orientation gives an isomorphism of bundles on $T$,
$$
 \bigl(p_{12*}\mathscr P_{\mathrm{res}}\bigr)^+
 \xrightarrow{\sim}p_{12*}H_{\cE\cF}.
$$

We finally specify the scalar in this comparison of extensions.
With the unscaled antisymmetrization fixed in Section~\ref{obstruction},
first-orientation projection is the identity on the kernel $V$.
The two polar residues add upstairs: under the natural identification of
the invariant quadratic differentials with $K_C^2(p+p')$, the upstairs
residue of $(\alpha,-\alpha^{\mathrm t})$ is
$2\operatorname{res}_{\Delta_C}(\alpha)$.
To check the scalar, at either branch point take $z=t^2$ and a spin frame
$e$ with $\varphi(e^2)=2\dd t$.  The polar section
$e(t)^3e(s)^3/(t-s)$ has upstairs residue $8(\dd t)^2$.
Either selected orientation has downstairs residue $(\dd z)^2/z$,
whose pullback is $4(\dd t)^2$; at $p'$ the two regular eigengenerators
are interchanged, with the same result.
Accordingly the comparison with \eqref{eq:cross-block}, whose quotient
map is the usual residue of the selected orientation, uses
$\tfrac12\id_{G_T}$ on the quotient.  This fixes the normalization and
gives an isomorphism of extensions.  Equivalently, with the natural
outer-term identifications, the class of \eqref{eq:cross-block} is twice
the class of the $J$-invariant extension.
\end{proof}

\section{Reduction to a mixed Atiyah class on the fixed curve}\label{Atiyah}

 We have isolated a concrete summand of the universal obstruction.  We now
restrict its middle sheaf from $C_T\times_T C_T$ to the first
infinitesimal neighbourhood $2\Delta_C$ (the doubled diagonal).  After direct image, this restriction realizes
the categorical pushout of the cross-block extension along the multiplication
map.  We then identify the resulting extension class with the direct image of
a mixed Atiyah class and descend the latter from $T$ to the fixed curve $C$.

We first fix the Atiyah-class notation used in this section.  For a line
bundle $\mathscr M$ on $C_T=T\times C$, its \emph{relative Atiyah class}
is the class of the first relative jet sequence
\begin{equation}\label{eq:relative-jet-sequence}
0\longrightarrow
 \mathscr M\otimes\Omega^1_{C_T/T}
\longrightarrow J^1_{C_T/T}(\mathscr M)
\longrightarrow\mathscr M\longrightarrow0.
\end{equation}
Thus
\begin{equation}\label{eq:relative-At-target}
 \At_{{C_T}/T}(\mathscr M)
 \in\operatorname{Ext}^1_{C_T}
   (\mathscr M,\mathscr M\otimes\Omega^1_{C_T/T})
 =H^1(C_T,\Omega^1_{C_T/T}).
\end{equation}
Since $C_T$ is a product and
$\Omega^1_{C_T/T}=\operatorname{pr}_C^*K_C$, K\"unneth gives
\begin{equation}\label{eq:At-Kunneth-T}
 H^1(C_T,\Omega^1_{C_T/T})
 \simeq
 H^1(T,\cO_T)\otimes H^0(C,K_C)
 \oplus
 H^0(T,\cO_T)\otimes H^1(C,K_C).
\end{equation}
We call the first summand the \emph{mixed component} and the second the
\emph{vertical component}.  The restriction of the vertical component to
a fibre $\{t\}\times C$ is the first Chern class of $\mathscr M_t$ and is
therefore determined by $\deg\mathscr M_t$.  In particular, a line bundle
of relative degree zero has no vertical component.  This applies below to
$L=\cE\cF^{-1}$.

Multiplication of sections defines
\begin{equation}\label{eq:mu}
 \mu:V\longrightarrow
 R_T:=\varpi_*(\cE\cF)=\varpi_*(K_C^3(p+p')).
\end{equation}
This is a morphism from the rank-nine bundle $V$ to the rank-seven bundle
$R_T$, both on $T$.

Let $2\Delta_C\subset C_T\times_T C_T$ be the closed subscheme defined by
$\mathcal I_{\Delta_C}^2$, and put
$$
 \cH=(\cE\boxtimes\cF)(\Delta_C).
$$
Thus $2\Delta_C$ is the first infinitesimal neighbourhood of the relative
diagonal, and
$$
 \cH|_{2\Delta_C}
 =\cH\otimes\cO_{2\Delta_C}
$$
is the restriction of $\cH$ to that neighbourhood.

To regard it as a bundle on $C_T\simeq\Delta_C$, use the symmetric
retraction
$$
 r:2\Delta_C\longrightarrow\Delta_C\simeq C_T,
 \qquad
 r^\#(f)=\frac{\operatorname{pr}_1^*f+\operatorname{pr}_2^*f}{2}
 \pmod{\mathcal I_{\Delta_C}^2}.
$$
Note that $r^\#$ is a ring map: its multiplicativity defect is
$$
 \frac14(\operatorname{pr}_1^*f-\operatorname{pr}_2^*f)
         (\operatorname{pr}_1^*g-\operatorname{pr}_2^*g)=0
 \quad\text{on }2\Delta_C.
$$
We can therefore put $\mathscr Q=r_*(\cH|_{2\Delta_C})$.
The map $r$ is a retraction invariant under exchange, and
$p_{12}|_{2\Delta_C}=\varpi\circ r$.
The sheaf $\mathscr Q$ is a rank-two $\cO_{C_T}$-bundle.

\begin{proposition}[Restriction and categorical pushout]
\label{prop:second-diagonal}
The restriction morphism
$$
 p_{12*}\cH
 \longrightarrow
 p_{12*}\bigl(\cH|_{2\Delta_C}\bigr)
$$
induces the multiplication map $\mu:V\to R_T$ on the kernels of the
corresponding extensions and the identity on their common quotient $G_T$.
The resulting exact sequence
\begin{equation}\label{eq:jet-extension}
0\longrightarrow R_T
\longrightarrow
p_{12*}\!\left(
  \cH|_{2\Delta_C}
\right)
\longrightarrow G_T\longrightarrow0
\end{equation}
is therefore canonically isomorphic to the categorical pushout of
\eqref{eq:cross-block} along $\mu$.

Its extension class is obtained by applying $\varpi_*$ to the
extension \eqref{eq:layers} below, with middle term $\mathscr Q$.
Under the canonical identification
$$
 \operatorname{Ext}^1_{C_T}(\cE\cF K_C^{-1},\cE\cF)
 =H^1(C_T,\operatorname{pr}_C^*K_C),
$$
the class of that extension is
\begin{equation}\label{eq:half-At}
 \frac12\At_{C_T/T}(\cE\cF^{-1})
 =\frac12\At_{C_T/T}(L).
\end{equation}
Thus, after direct image
by $\varpi$, 
the class of \eqref{eq:jet-extension} lies in
\begin{equation}\label{eq:jet-extension-target}
 \operatorname{Ext}^1_T(G_T,R_T)
 =H^1\!\left(T,\Hom(G_T,R_T)\right).
\end{equation}
The equality is understood up to the global residue-sign convention.
\end{proposition}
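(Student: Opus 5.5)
I will compare the two-step filtration of $\cH$ along the diagonal with its restriction to $2\Delta_C$.

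\textbf{The sheaf sequence on $2\Delta_C$.} Since $\mathcal I_{\Delta_C}/\mathcal I_{\Delta_C}^2\simeq\Omega^1_{C_T/T}$ and $\cO(\Delta_C)|_{\Delta_C}\simeq K_C^{-1}$, the filtration $\mathcal I_{\Delta_C}\cH|_{2\Delta_C}\subset\cH|_{2\Delta_C}$ gives, after applying $r_*$, the exact sequence
\begin{equation}\label{eq:layers}
0\longrightarrow\cE\cF\longrightarrow\mathscr Q\longrightarrow\cE\cF K_C^{-1}\longrightarrow0 .
\end{equation}
Here the sub is $\cH(-\Delta_C)|_{\Delta_C}=\cE\cF$, and the quotient is $\cH|_{\Delta_C}=\cE\cF K_C^{-1}$. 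The quotient map is the residue along $\Delta_C$.

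\textbf{Pushout statement.} Next apply $p_{12*}=\varpi_*\circ r_*$. Fibrewise, $\cE\cF K_C^{-1}=K_C^2(p+p')$ has degree $4>2$, so $H^1$ vanishes. Also $H^1(\cE\cF)=0$. Cohomology and base change then turns \eqref{eq:layers} into the exact sequence \eqref{eq:jet-extension}. The restriction map $\cH\to\cH|_{2\Delta_C}$ respects the filtrations:
\begin{itemize}
\item on the sub, $\cE\boxtimes\cF\to(\cE\boxtimes\cF)|_{\Delta_C}=\cE\cF$ is restriction to the diagonal, whose direct image is exactly $\mu:V\to R_T$;
\item on the quotient it is the identity of $(i_{\Delta_C})_*\cE\cF K_C^{-1}$, hence the identity on $G_T$ after direct image.
\end{itemize}
A morphism of extensions that is the identity on quotients exhibits the target as the pushout along the map on kernels. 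This is the standard universal property, and it gives the canonical isomorphism with $\mu_*$ of \eqref{eq:cross-block}. Exactness of $\varpi_*$ on the three terms ensures the class of \eqref{eq:jet-extension} is $\varpi_*$ of the class of \eqref{eq:layers}, landing in \eqref{eq:jet-extension-target}.

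\textbf{Identifying the class (main obstacle).} I expect this step to be the hard one. The plan is local. On an open cover $\{U_\alpha\}$ of $C_T$, choose frames $e_\alpha$ of $\cE$ and $f_\alpha$ of $\cF$, with transitions $e_\alpha=g_{\alpha\beta}e_\beta$ and $f_\alpha=h_{\alpha\beta}f_\beta$. Use a relative coordinate $z$, with $w$ its copy on the second factor.

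A local splitting of \eqref{eq:layers} is given by $s_\alpha=e_\alpha(z)f_\alpha(w)/(z-w)$. Expressed in the $r$-symmetric coordinates, this is a section with residue $e_\alpha f_\alpha\,dz^{-1}$. Comparing $s_\alpha$ with $s_\beta$, the difference is
\[
\frac{g_{\alpha\beta}(z)h_{\alpha\beta}(w)-g_{\alpha\beta}h_{\alpha\beta}(\tfrac{z+w}2)}{z-w}\,e_\beta f_\beta .
\]
The symmetric midpoint comes from $r$. Modulo $(z-w)^2$ this equals
\[
\tfrac12\bigl(g'_{\alpha\beta}h_{\alpha\beta}-g_{\alpha\beta}h'_{\alpha\beta}\bigr)\,e_\beta f_\beta=\tfrac12\,d\log\bigl(g_{\alpha\beta}/h_{\alpha\beta}\bigr)\cdot(\text{frame}).
\]
Here $g_{\alpha\beta}/h_{\alpha\beta}$ is the transition function of $L=\cE\cF^{-1}$. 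The relative $d\log$ cocycle of $L$ represents $\At_{C_T/T}(L)$ in \eqref{eq:relative-At-target}, so the class is $\tfrac12\At_{C_T/T}(L)$.

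The delicate points are the consistent choice of residue sign and of the symmetric retraction. Without the symmetric retraction, the factor $\tfrac12$ is replaced by a non-canonical split, and the vertical component of $\cE\cF$ would appear. This is why the statement is asserted only up to the global residue-sign convention, which I would fix by the orientation $1/(z-w)$ used in \cref{lem:cross-block}.
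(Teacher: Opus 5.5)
Your argument follows the paper's proof step for step. It uses the conormal filtration of $\cH|_{2\Delta_C}$ pushed forward along the symmetric retraction $r$, cohomology and base change, and the restriction diagram giving $\mu$ on kernels and the identity on $G_T$. It then applies the universal property of the pushout and a local \v Cech computation, in which the midpoint $(z+w)/2$ turns $g_{\alpha\beta}(z)h_{\alpha\beta}(w)$ into $\tfrac12\,d\log(g_{\alpha\beta}/h_{\alpha\beta})$. There is one harmless slip: $K_C^2(p+p')$ has fibre degree $6$, not $4$.

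The one step you omit, which the paper checks explicitly, is the effect of changing the relative coordinate between charts. Your comparison of $s_\alpha$ with $s_\beta$ uses a single coordinate $z$ on both $U_\alpha$ and $U_\beta$. This cannot hold on all overlaps, since the coordinates would then glue to a nonconstant function on a compact fibre. In general $z_\alpha=\phi(z_\beta)$, and then two things change:
the polar denominator of $s_\alpha$, and the frame $dz_\alpha^{-1}$ of $K_C^{-1}$ used to normalize its residue.
You must show that no extra constant term enters the splitting cocycle. In the symmetric coordinates $m=(z+w)/2$, $u=z-w$ one has $\phi(m+u/2)-\phi(m-u/2)=\phi'(m)\,u+O(u^3)$. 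After absorbing $\phi'(m)$ into the frame change $dz_\alpha=\phi'\,dz_\beta$, the reciprocal therefore has no constant correction, and the cocycle $\tfrac12\,d\log(g_{\alpha\beta}/h_{\alpha\beta})$ is unchanged.

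This is the second place where the symmetric retraction is essential, and it is a more precise statement than your remark about a vertical component. With an asymmetric expansion, the even term $\tfrac12\phi''u^2$ would contribute $\pm\tfrac12\,d\log\phi'$, which is half the Atiyah cocycle of $K_C$. With this short check added, your proof is complete.
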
 

\begin{proof}
Let $\mathcal I_{\Delta_C}$ be the ideal sheaf of the diagonal.  The
conormal filtration of $\cH|_{2\Delta_C}$ gives
$$
0\longrightarrow
 \left(\mathcal I_{\Delta_C}/\mathcal I_{\Delta_C}^2\right)
 \otimes\cH|_{\Delta_C}
\longrightarrow
 \cH|_{2\Delta_C}
\longrightarrow
 \cH|_{\Delta_C}
\longrightarrow0.
$$
Since
$$
 \cO(\Delta_C)|_{\Delta_C}\simeq K_C^{-1},
 \qquad
 \mathcal I_{\Delta_C}/\mathcal I_{\Delta_C}^2\simeq K_C,
$$
we have
$$
 \cH|_{\Delta_C}\simeq\cE\cF K_C^{-1},
 \qquad
 \left(\mathcal I_{\Delta_C}/\mathcal I_{\Delta_C}^2\right)
 \otimes\cH|_{\Delta_C}\simeq\cE\cF.
$$
Pushing the conormal filtration forward along the finite retraction
$r$, the two layers give the exact sequence of $\cO_{C_T}$-modules
\begin{equation}\label{eq:layers}
0\longrightarrow\cE\cF
\longrightarrow\mathscr Q=r_*(\cH|_{2\Delta_C})
\longrightarrow\cE\cF K_C^{-1}
\longrightarrow0.
\end{equation}

The outer terms are $K_C^3(p+p')$ and $K_C^2(p+p')$.  On every genus-two
fibre they have degrees $8$ and $6$, respectively, so their first
cohomology vanishes.  Cohomology and base change therefore show that
applying $\varpi_*$ to \eqref{eq:layers} gives an exact sequence of vector
bundles on $T$, namely \eqref{eq:jet-extension}, since
$\varpi_*\mathscr Q=p_{12*}(\cH|_{2\Delta_C})$.

The restriction morphism $\cH\to\cH|_{2\Delta_C}$ now gives the
commutative diagram
$$
\begin{tikzcd}[column sep=small]
0 \arrow[r] & V \arrow[r] \arrow[d,"\mu"]
  & p_{12*}\cH \arrow[r] \arrow[d,"\mathrm{res}"]
  & G_T \arrow[r] \arrow[d,equal] & 0\\
0 \arrow[r] & R_T \arrow[r]
  & p_{12*}(\cH|_{2\Delta_C}) \arrow[r]
  & G_T \arrow[r] & 0.
\end{tikzcd}
$$
The middle vertical arrow is induced by restriction to $2\Delta_C$.
On the kernel
$$
 V=\varpi_*\cE\otimes\varpi_*\cF
$$
it is fibrewise multiplication, hence it induces precisely the map
$\mu:V\to R_T$.

The universal property of the categorical pushout therefore produces a
morphism from the pushout of the top row along $\mu$ to the bottom row.
This morphism is the identity on both $R_T$ and $G_T$, and is consequently
an isomorphism.  Thus restriction of the middle sheaf to $2\Delta_C$,
followed by direct image, realizes the categorical pushout of
\eqref{eq:cross-block} along $\mu$.

We now identify its extension class.  Choose an open cover
$\{U_a\}$ of $C_T$ carrying relative coordinates and frames of $\cE$ and
$\cF$.  On $U_a\times_TU_a$, write
$z=(x+y)/2$ and $u=x-y$; the coordinate $z$ realizes the retraction
$r$, and the conormal class of $u$ is identified with $\dd z$.
Suppose that on $U_{ab}$ the two frames change
by relative transition functions $g_{ab}$ and $k_{ab}$.  Modulo $u^2$,
$$
 \frac{g_{ab}(z+u/2)k_{ab}(z-u/2)}
      {g_{ab}(z)k_{ab}(z)}
 =1+\frac{u}{2}
   \left(\partial_z\log g_{ab}
        -\partial_z\log k_{ab}\right).
$$
Consequently, after the leading polar term $1/u$ has been matched, the
constant term in the change of the local splitting of
\eqref{eq:layers} is
$$
 \frac12\left(
 \dd_{C_T/T}\log g_{ab}-\dd_{C_T/T}\log k_{ab}
 \right).
$$
These forms constitute the \v Cech representative of
$\frac12\At_{C_T/T}(\cE\cF^{-1})$ in
$H^1(C_T,\Omega^1_{C_T/T})$.  For a change of relative coordinate $v=\phi(x)$, the polar denominator
transforms as
$$
 \phi(z+u/2)-\phi(z-u/2)=\phi'(z)u+O(u^3).
$$
After matching the leading polar term, its reciprocal has no constant
correction.  Thus the coordinate change adds no further term to the
splitting cocycle.  This calculation identifies the global class of
\eqref{eq:layers}; the direct-image cocycle for \eqref{eq:jet-extension}
is made explicit in Proposition~\ref{prop:reduction} below.  Finally,
\eqref{eq:EF-ratio} gives \eqref{eq:half-At}.
\end{proof}

We now introduce the map to the fixed curve that is needed for the descent:
\begin{equation}\label{eq:h-map}
 h:=\rho\circ f:T\longrightarrow C.
\end{equation}
It is finite $\acute{e}$tale of positive degree: a nonempty connected
component of a finite $\acute{e}$tale cover of the connected curve
$\widetilde C$ has open and closed, hence surjective, image.  On $C\times C$, let $p_1,p_2$ be the two
projections, regard the first factor as the parameter, and let
$\Gamma_\sigma=\{(x,\sigma(x)):x\in C\}$.  Define the vector bundles on
the first copy of $C$
\begin{equation}\label{eq:En}
 E_n=p_{1*}\!\left(p_2^*K_C^n(\Delta+\Gamma_\sigma)\right),
 \qquad n=2,3,
\end{equation}
of ranks $5$ and $7$, respectively.  Since the inverse images of
$\Delta$ and $\Gamma_\sigma$ under $h\times\id_C$ are the divisors $p$
and $p'$, flat base change gives canonical isomorphisms
\begin{equation}\label{eq:GT-RT-pullback}
 E_{2,T}:=h^*E_2\simeq G_T,
 \qquad
 E_{3,T}:=h^*E_3\simeq R_T.
\end{equation}
This notation makes explicit the parallel roles of the two bundles on
$C$ and their pullbacks to $T$.

Multiplication on the second factor defines a morphism of vector bundles
on $C$
\begin{equation}\label{eq:j}
 j:H^0(C,K_C)\otimes\cO_C\longrightarrow\cHom(E_2,E_3),
 \qquad
 s\longmapsto m_s,
\end{equation}
where $m_s$ is multiplication by the canonical form $s$ on the moving
second copy of $C$.  Finally put
\begin{equation}\label{eq:Lcal}
 \cL=\cO_{C\times C}(\Delta-\Gamma_\sigma).
\end{equation}
Its relative Atiyah class, relative to $p_1$, belongs to
$H^1(C\times C,p_2^*K_C)$.  Since $\cL$ has degree zero on every fibre of
$p_1$, its vertical component vanishes.  We denote its mixed component by
\begin{equation}\label{eq:Atmix-space}
 \At_{\mathrm{mix}}(\cL)
 \in H^1(C,\cO_C)\otimes H^0(C,K_C).
\end{equation}

\begin{proposition}[Cohomological reduction]
\label{prop:reduction}
Under the identifications \eqref{eq:GT-RT-pullback}, the class of
\eqref{eq:jet-extension} is
\begin{equation}\label{eq:pullback-key-class}
 \pm\frac14\,
 h^*H^1(j)\bigl(\At_{\mathrm{mix}}(\cL)\bigr)
 \in H^1\!\left(T,\Hom(E_{2,T},E_{3,T})\right),
\end{equation}
where the sign depends only on the residue and \v Cech conventions.
Consequently, if
\begin{equation}\label{eq:key-nonzero}
 H^1(j)\bigl(\At_{\mathrm{mix}}(\cL)\bigr)\neq0,
\end{equation}
then $\omega_2(\M_4^+)\neq0$.
\end{proposition}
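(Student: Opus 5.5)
We compute the class of \eqref{eq:jet-extension} as the direct image of the class of \eqref{eq:layers}, then descend to $C$ and match constants. The consequence then follows from the chain of reductions already in place.

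\textbf{Step 1: direct image of \eqref{eq:layers}.} By Proposition~\ref{prop:second-diagonal}, \eqref{eq:layers} has class $\frac12\At_{C_T/T}(L)$ under the identification with $H^1(C_T,\operatorname{pr}_C^*K_C)$. The vertical component vanishes because $L$ has relative degree zero, so only the mixed component in $H^1(T,\cO_T)\otimes H^0(C,K_C)$ survives. The outer terms of \eqref{eq:layers} have no fibrewise $H^1$, so the edge map of the Leray spectral sequence for $\varpi$ sends
\[
\operatorname{Ext}^1_{C_T}(\cE\cF K_C^{-1},\cE\cF)
\quad\text{to}\quad
\operatorname{Ext}^1_T\bigl(\varpi_*(\cE\cF K_C^{-1}),\varpi_*(\cE\cF)\bigr).
\]
I will make this explicit in \v Cech terms. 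Take a cover $\{U_a\}$ of $T$ and the cocycle $\theta_{ab}\otimes s$ with $\theta_{ab}\in\cO(U_{ab})$ and $s\in H^0(C,K_C)$. A mixed cocycle is constant along $C$, so the change of local splittings of $\varpi_*\mathscr Q$ on $U_{ab}$ is multiplication by $\theta_{ab}\,s$ from $G_T$ to $R_T$. This is exactly $H^1(j)$ applied to the mixed class, transported to $T$. The direct-image class is therefore
\[
\tfrac12\,H^1(j_T)\bigl(\At_{\mathrm{mix}}(L)\bigr),
\]
where $j_T$ is the analogue of \eqref{eq:j} on $T$.

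\textbf{Step 2: descent to $C$.} I need to relate $\At_{\mathrm{mix}}(L)$ on $C_T$ to $h^*\At_{\mathrm{mix}}(\cL)$.
\begin{itemize}
\item By \eqref{eq:L}, $L^2\simeq\cO(\Gamma_1-\Gamma_2)$ pulled back to $T\times C$, which is $(h\times\id_C)^*\cL$ via \eqref{eq:h-map}, since $\Gamma_1,\Gamma_2$ pull back to the divisors $p,p'$.
\item The Atiyah class is additive in tensor products, so $\At(L)=\frac12(h\times\id)^*\At(\cL)$.
\item The Atiyah class is functorial under the base change $h\times\id_C$, which preserves the K\"unneth decomposition \eqref{eq:At-Kunneth-T}. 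Hence the mixed component pulls back by $h^*\otimes\id_{H^0(K_C)}$.
\item Flat base change identifies $j_T$ with $h^*j$ compatibly with \eqref{eq:GT-RT-pullback}.
\end{itemize}
Combining the two factors of $\frac12$ gives $\pm\frac14 h^*H^1(j)(\At_{\mathrm{mix}}(\cL))$. The sign comes from the residue and \v Cech orientation conventions.

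\textbf{Step 3: the consequence.} Assume \eqref{eq:key-nonzero} holds. Since $h$ is finite \'etale of positive degree, \eqref{eq:trace-injective} shows the pullback \eqref{eq:pullback-key-class} is nonzero. Then \eqref{eq:jet-extension} is non-split. By Proposition~\ref{prop:second-diagonal} it is a pushout of \eqref{eq:cross-block}, so \eqref{eq:cross-block} is non-split. By Lemma~\ref{lem:cross-block}, the $J$-invariant part of \eqref{eq:DW} has twice that class, and characteristic zero lets us ignore the factor $2$. The $J$-invariant summand is a direct summand of the extension, since $J$ has finite order, so \eqref{eq:DW} is non-split. Hence $\kappa^*\omega_2(\M_4^+)\neq0$, and therefore $\omega_2(\M_4^+)\neq0$.

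\textbf{Main obstacle.} The delicate point is Step~1: checking that the direct-image cocycle is literally $H^1(j)$ of the mixed component with no extra contribution. A mixed class is represented by functions pulled back from $T$ times a global form on $C$. I would first choose frames of $\cE,\cF$ over products $U_a\times C$ adapted to this. If that fails because $L$ is not trivial on such opens, I would instead use the Leray/K\"unneth compatibility abstractly. The edge map $H^1(C_T,\operatorname{pr}_C^*K_C)\to H^1(T,\varpi_*\operatorname{pr}_C^*K_C)=H^1(T,\cO_T)\otimes H^0(K_C)$ kills the vertical part and is compatible with cup product with the multiplication pairing; that compatibility gives the result.
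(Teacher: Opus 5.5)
Your proposal is correct and follows essentially the paper's route: you split \eqref{eq:layers} over each $U_a\times C$, push forward so the differences of splittings become $j_T(\theta_{ab})$, and descend via $L^2=(h\times\id_C)^*\cL$, additivity and naturality of the Atiyah class, and flat base change. You then run the same trace, pushout and $J$-averaging chain as the paper. The one point to state explicitly, as the paper does, is why those splittings exist: the vanishing vertical component makes the class die in $H^1(U_a\times C,\operatorname{pr}_C^*K_C)=\Gamma(U_a,\cO_T)\otimes H^1(C,K_C)$ for affine $U_a$. Your alternative of trivializing $\cE,\cF$ on $U_a\times C$ cannot work, since they have fibre degree four. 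Also, Lemma~\ref{lem:cross-block} puts the factor $2$ on the cross block rather than on the $J$-invariant part, which is harmless for non-vanishing.
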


\begin{proof}
The pullback identities for the two outer bundles are
\eqref{eq:GT-RT-pullback}.  Multiplication by a relative canonical form
defines
$$
 j_T:H^0(C,K_C)\otimes\cO_T
 \longrightarrow\Hom(E_{2,T},E_{3,T}),
$$
and flat base change identifies this morphism with $h^*j$.  The morphism
$j_T$ acts on the direct-image bundles by multiplication by a relative
canonical form.  It is distinct from the kernel morphism
$\mu:V\to R_T$ along which the categorical pushout of
\eqref{eq:cross-block} is formed.

The divisor identity \eqref{eq:L} gives the equality of line bundles on
$C_T$
\begin{equation}\label{eq:L-square-pullback}
 L^2=(h\times\id_C)^*\cL.
\end{equation}
Naturality and additivity of the relative Atiyah class give
\begin{equation}\label{eq:At-pullback-identity}
 2\At_{C_T/T}(L)
 =(h\times\id_C)^*
   \At_{(C\times C)/C}(\cL).
\end{equation}
Both sides have zero vertical component.  Under the mixed summand of the
K\"unneth decomposition
$$
H^1(T\times C,\operatorname{pr}_C^*K_C)
\simeq H^1(T,\cO_T)\otimes H^0(C,K_C)
 \oplus H^0(T,\cO_T)\otimes H^1(C,K_C),
$$
the class $\At_{C_T/T}(L)$ corresponds to
\begin{equation}\label{eq:beta-class}
 \beta:=\frac12h^*\At_{\mathrm{mix}}(\cL)
 \in H^1(T,\cO_T)\otimes H^0(C,K_C)
 =H^1\!\left(T,H^0(C,K_C)\otimes\cO_T\right).
\end{equation}

To identify the direct image, choose an affine cover $\{U_a\}$ of
$T$ (or a Stein cover in the analytic setting).  On a whole inverse image
$U_a\times C$, K\"unneth gives
$$
 H^1(U_a\times C,\operatorname{pr}_C^*K_C)
 =\Gamma(U_a,\cO_T)\otimes H^1(C,K_C).
$$
By Proposition~\ref{prop:second-diagonal}, the class of
\eqref{eq:layers} is $\frac12\At_{C_T/T}(L)$ up to the fixed sign.
Since $L$ has degree zero on each fibre, this class has zero vertical
component, so its restriction to $U_a\times C$ vanishes.
Therefore \eqref{eq:layers} admits an $\cO_{U_a\times C}$-linear splitting
over each whole inverse image.

Differences of these splittings are global sections on $U_{ab}\times C$ of
$\cHom(\cE\cF K_C^{-1},\cE\cF)=\operatorname{pr}_C^*K_C$, hence are forms
$$
 \theta_{ab}\in\Gamma(U_{ab},\cO_T)\otimes H^0(C,K_C).
$$
Under the Leray--K\"unneth identification of the kernel of fibre
restriction with $H^1(T,\varpi_*\operatorname{pr}_C^*K_C)$, their cocycle
represents $\frac12\beta$, up to the same overall sign.
Applying $\varpi_*$ to these splittings gives genuine local splittings
of \eqref{eq:jet-extension} on $U_a$.  Their differences are multiplication
by $\theta_{ab}$, namely $j_T(\theta_{ab})$.
It follows that the class of \eqref{eq:jet-extension} is
$$
 \pm\frac12H^1(j_T)(\beta)
 =\pm\frac14h^*H^1(j)
   \bigl(\At_{\mathrm{mix}}(\cL)\bigr)
$$
as asserted in \eqref{eq:pullback-key-class}.

Finally, $h$ is finite $\acute{e}$tale, so
\eqref{eq:trace-injective} shows that a nonzero class in
\eqref{eq:key-nonzero} remains nonzero after pullback to $T$.  Hence
\eqref{eq:jet-extension} is non-split.  By
Proposition~\ref{prop:second-diagonal}, this extension is the categorical pushout of
the invariant cross block \eqref{eq:cross-block} along $\mu$.  Since every
pushout of a split extension is split, the invariant cross block is
non-split.  By the normalized comparison in Lemma~\ref{lem:cross-block},
the $J$-invariant part of \eqref{eq:DW} is therefore non-split.
Since $J$ fixes $T$ and has finite order, averaging gives
$\cO_T$-linear projections onto the invariant subbundles, commuting
with the maps of \eqref{eq:DW}.  Write
$P_W:W_{\mathrm{DW}}\to W_{\mathrm{DW}}^J$ for the middle projection.
A splitting $s:G\to W_{\mathrm{DW}}$ of \eqref{eq:DW} would give a
splitting $P_W\circ s|_{G^J}$ of the invariant sequence.
Hence the full extension \eqref{eq:DW} is non-split.

\end{proof}

\section{The non-vanishing calculation} \label{calculation}

It remains to prove \eqref{eq:key-nonzero}.  We now compute the relevant
elementary transforms and the mixed Atiyah class explicitly on
$C:y^2=x^6-1$.

Set
\begin{equation}\label{eq:u-basis}
 u_0=\frac{\dd x}{y},
 \qquad
 u_1=x\frac{\dd x}{y}.
\end{equation}
For $V_n=H^0(C,K_C^n)$ use the bases
\begin{align}
 V_1&=(u_0,u_1),\notag\\
 V_2&=(q_0,q_1,q_2)=(u_0^2,u_0u_1,u_1^2),\notag\\
 V_3&=(r_0,r_1,r_2,r_3,r_4)\notag\\
    &=(u_0^3,xu_0^3,x^2u_0^3,x^3u_0^3,yu_0^3).
\label{eq:V-bases}
\end{align}
The last generator is the extra hyperelliptic generator of the canonical
ring.  Let
$\rho_n=\sigma^*|_{V_n}\in\operatorname{GL}(V_n)$.  In the ordered bases
of \eqref{eq:V-bases}, these endomorphisms are
\begin{align}
 \rho_1&=\operatorname{diag}(-\zeta,-\zeta^2),\notag\\
 \rho_2&=\operatorname{diag}(\zeta^2,1,\zeta),\notag\\
 \rho_3&=\operatorname{diag}(-1,-\zeta,-\zeta^2,-1,1).
\label{eq:rho-matrices}
\end{align}
Put
\begin{equation}\label{eq:D}
 D=\rho_3^{-1}=\operatorname{diag}(-1,-\zeta^2,-\zeta,-1,1).
\end{equation}

Since $H^1(C,K_C^n)=0$ for $n=2,3$, the residue sequence along
the two disjoint graphs in \eqref{eq:En} gives the vector-bundle extension
\begin{equation}\label{eq:En-extension}
0\longrightarrow V_n\otimes\cO_C
\longrightarrow E_n
\longrightarrow Q_n\longrightarrow0,
\qquad
Q_n=K_C^{n-1}\oplus\sigma^*K_C^{n-1}.
\end{equation}
Thus $E_2$ and $E_3$ have ranks $5$ and $7$, while $Q_2$ and $Q_3$ have
rank $2$.  The extension class of \eqref{eq:En-extension} lives in
\begin{equation}\label{eq:en-class-target}
 e_n\in
 \operatorname{Ext}^1_C(Q_n,V_n\otimes\cO_C)
 =H^1\!\left(C,\cHom(Q_n,V_n\otimes\cO_C)\right).
\end{equation}
Identify the second summand with $K_C^{n-1}$ by the pullback differential.
Under Serre duality and our residue convention, the elementary-transform
class of the diagonal is the canonical tensor $I\in V_n\otimes V_n^*$.
This can be verified directly.  Put $B_n=K_C^{1-n}$.  Relative Serre
duality identifies the dual of the single-diagonal extension with the
direct-image sequence obtained from
$$
 0\longrightarrow p_2^*B_n(-\Delta)
 \longrightarrow p_2^*B_n
 \longrightarrow (i_\Delta)_*B_n\longrightarrow0.
$$
Its connecting map on the parameter curve is, up to the common residue
sign, diagonal restriction
$$
 H^1(C\times C,p_2^*B_n)\longrightarrow H^1(C,B_n).
$$
Since $H^0(C,B_n)=0$, K\"unneth identifies this map with the identity.
The residue normalization is chosen so that the resulting diagonal
tensor is $I$ for both $n=2$ and $n=3$.
The single-graph family for $\Gamma_\sigma$ is obtained from the diagonal
family by pullback by $\sigma$ on the parameter.  For
$c\in H^1(C,K_C^{1-n})$ and $s\in V_n$, Serre duality gives
$$
 \langle\sigma^*c,s\rangle
 =\langle c,(\sigma^{-1})^*s\rangle.
$$
Thus pullback on the cohomology factor is dual to $\rho_n^{-1}$ and
transforms the graph tensor into $\rho_n^{-1}$.
In this fixed convention, the class of \eqref{eq:En-extension} is
\begin{equation}\label{eq:en-class}
 e_n=(I,\rho_n^{-1}).
\end{equation}

For $q\in V_2$, let $N_q:V_1\to V_3$ be multiplication by $q$.  The
connecting map for \eqref{eq:En-extension} with $n=3$ is
\begin{equation}\label{eq:delta3}
 \delta_3:V_2\oplus V_2\longrightarrow\Hom(V_1,V_3),
 \qquad
 (q,q')\longmapsto N_q+DN_{q'}.
\end{equation}
Here we have used
$H^0(C,Q_3)\simeq V_2\oplus V_2$ and, by Serre duality,
$H^1(C,V_3\otimes\cO_C)\simeq\Hom(V_1,V_3)$.
Explicitly, if $q=a_0q_0+a_1q_1+a_2q_2$, then in the bases
\eqref{eq:V-bases}
\begin{equation}\label{eq:Nq-matrix}
 N_q=
 \begin{pmatrix}
  a_0&0\\
  a_1&a_0\\
  a_2&a_1\\
  0&a_2\\
  0&0
 \end{pmatrix}
 \in\Hom(V_1,V_3).
\end{equation}

\begin{lemma}\label{lem:delta-injective}
The map $\delta_3$ is injective.
\end{lemma}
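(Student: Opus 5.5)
We need to show that $N_q + D N_{q'} = 0$ forces $q = q' = 0$. The plan is a direct linear-algebra computation in the bases \eqref{eq:V-bases}, using the explicit matrix \eqref{eq:Nq-matrix} together with the diagonal matrix $D$ from \eqref{eq:D}.

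Write $q=a_0q_0+a_1q_1+a_2q_2$ and $q'=b_0q_0+b_1q_1+b_2q_2$. Since $D=\operatorname{diag}(d_0,\dots,d_4)=\operatorname{diag}(-1,-\zeta^2,-\zeta,-1,1)$ acts by scaling rows, the matrix $DN_{q'}$ has rows
\[
(d_0b_0,0),\ (d_1b_1,d_1b_0),\ (d_2b_2,d_2b_1),\ (0,d_3b_2),\ (0,0).
\]
Setting $N_q+DN_{q'}=0$ and reading off the entries gives the following linear system.
\begin{itemize}
\item Row $0$, column $0$: $a_0=b_0$.
\item Row $1$: $a_1=\zeta^2 b_1$ and $a_0=\zeta^2 b_0$.
\item Row $2$: $a_2=\zeta b_2$ and $a_1=\zeta b_1$.
\item Row $3$, column $1$: $a_2=b_2$.
\end{itemize}

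Combining these pairwise finishes the argument. From $a_0=b_0=\zeta^2 b_0$ and $\zeta^2\neq1$ we get $a_0=b_0=0$. From $\zeta^2b_1=\zeta b_1$ and $\zeta\neq 1$ (with $\zeta\neq0$) we get $b_1=0$, hence $a_1=0$. From $\zeta b_2=b_2$ we get $b_2=0$, hence $a_2=0$. So $(q,q')=0$ and $\delta_3$ is injective.

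The only delicate point is making sure the matrix form of $\delta_3$ in \eqref{eq:delta3} is used exactly as stated, namely with $D$ multiplying on the left. The conclusion is robust, though. The equations only use that consecutive diagonal entries of $D$ in the relevant rows are distinct and differ from the corresponding entries of the identity, here $-1$ versus $-\zeta^2$, $-\zeta^2$ versus $-\zeta$, and $-\zeta$ versus $-1$. So injectivity does not depend on these sign conventions. It follows from $\zeta$ being a primitive cube root of unity, so that $1,\zeta,\zeta^2$ are pairwise distinct.
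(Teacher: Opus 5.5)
Your proof is correct and is essentially the paper's argument: for each $k$, the two nonzero entries involving $a_k,b_k$ give $a_k+d_kb_k=0=a_k+d_{k+1}b_k$, and since consecutive entries of $(d_0,d_1,d_2,d_3)=(-1,-\zeta^2,-\zeta,-1)$ are distinct, $a_k=b_k=0$. In your final paragraph, the phrase about the entries differing ``from the corresponding entries of the identity'' is superfluous, since only the distinctness of consecutive $d_k$ is used.
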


\begin{proof}
Write $q=\sum a_kq_k$ and $q'=\sum b_kq_k$.  
The condition for $(q,q')$ to be in the kernel of $\delta_3$ is that
for each $k=0,1,2$, two
successive nonzero entries in \eqref{eq:Nq-matrix} give
$$
 a_k+d_kb_k=0,
 \qquad
 a_k+d_{k+1}b_k=0,
$$
where
$$
 (d_0,d_1,d_2,d_3)=(-1,-\zeta^2,-\zeta,-1).
$$
Consecutive $d_k$ are distinct, so $a_k=b_k=0$ for every $k$.
\end{proof}

Under Serre duality,
\begin{equation}\label{eq:End-identification}
H^1(C,\cO_C)\otimes H^0(C,K_C)\simeq\End(V_1).
\end{equation}
Under \eqref{eq:End-identification}, the mixed diagonal class is
the identity correspondence on $H^0(C,K_C)$: cup product with the
diagonal class followed by pushforward acts as the identity.
The global Atiyah sign is fixed so that this tensor is $I$.  Since
$\cO(\Gamma_\sigma)=(\sigma\times\id_C)^*\cO(\Delta)$,
the same first-factor pullback and Serre-duality calculation gives
$\rho_1^{-1}$ for the graph contribution.  We use one uniform overall
Atiyah-sign convention.  Thus
\begin{equation}\label{eq:A-matrix}
 A:=\At_{\mathrm{mix}}(\cL)
 =I-\rho_1^{-1}
 =\operatorname{diag}(-\zeta,-\zeta^2).
\end{equation}
In particular, $A$ is an endomorphism of
$V_1=H^0(C,K_C)$, and the last matrix in \eqref{eq:A-matrix} is written in
the ordered basis $(u_0,u_1)$ from \eqref{eq:u-basis}.

\begin{proposition}\label{prop:explicit-nonzero}
For the multiplication morphism \eqref{eq:j},
$$
 H^1(j)(A)\neq0
 \quad\text{in}\quad
 H^1\!\left(C,\cHom(E_2,E_3)\right).
$$
\end{proposition}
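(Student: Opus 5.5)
The plan is to test $H^1(j)(A)$ against the filtrations \eqref{eq:En-extension} of $E_2$ and $E_3$, and to reduce everything to finite-dimensional linear algebra in the bases \eqref{eq:V-bases}. The operators involved are $N_q$, $D=\rho_3^{-1}$ and $\rho_2^{-1}$. The class $H^1(j)(A)$ comes from $A\in H^1(C,\cO_C)\otimes V_1$ by tensoring with the multiplication map $m:V_1\to\Hom(E_2,E_3)$. Multiplication by a canonical form sends the subbundle $V_2\otimes\cO_C$ of $E_2$ into $V_3\otimes\cO_C$, and sends $Q_2$ to $Q_3$ compatibly. So $H^1(j)(A)$ lies in the image of the cohomology of the subsheaf $\mathcal K\subset\cHom(E_2,E_3)$ of filtration-preserving homomorphisms. $\mathcal K$ has graded pieces $\Hom(V_2,V_3)\otimes\cO_C$, $\cHom(Q_2,V_3\otimes\cO_C)$ and $\cHom(Q_2,Q_3)$.

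\emph{Step 1.} First project to $H^1(C,\cHom(V_2\otimes\cO_C,E_3))$ by restricting along $V_2\otimes\cO_C\hookrightarrow E_2$. Using $H^1(C,\cO_C)\simeq V_1^*$, the image there is the class of the map $q\mapsto N_q\circ A$ in $\Hom(V_2,\Hom(V_1,V_3))$. Modulo $\Hom(V_2,\delta_3(V_2\oplus V_2))$ this is the image of the connecting map of \eqref{eq:En-extension} for $n=3$. A quick check on the basis $q_0,q_1,q_2$ shows that each $N_{q_k}A$ \emph{does} lie in the image of $\delta_3$. The explicit solutions are
\[
(a_0,b_0)=(-\zeta-\zeta^2,\,-\zeta^2)\text{ for }q_0,\qquad
(a_1,b_1)=(1,-1)\text{ for }q_1,\qquad
(a_2,b_2)=(1,-\zeta)\text{ for }q_2.
\]
By Lemma~\ref{lem:delta-injective}, these solutions $\varphi=(\varphi_1,\varphi_2)\in\Hom(V_2,V_2\oplus V_2)$ are \emph{unique}. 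So this crude projection does not detect the class; the obstruction must come from the interaction with the $Q_2$-part of $E_2$.

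\emph{Step 2.} Suppose $H^1(j)(A)=0$. Then, after Step~1, any trivializing $0$-cochain is forced on the $V_2$-block. Up to the unique $\varphi$ above, it consists of global sections of $\cHom(V_2\otimes\cO_C,Q_3)$, namely the constant $\varphi$ composed with the residue maps. I would then turn to the remaining block $\cHom(Q_2,E_3)$. Here one must use the extension class $e_2=(I,\rho_2^{-1})$ of $E_2$ from \eqref{eq:en-class}. Composing $\varphi$ with $e_2$ produces a class in $\Hom(V_1,\Hom(Q_2\text{-sections},V_3))$, roughly $(q,q')\mapsto N_{\varphi(q)}+N_{\varphi(\rho_2 q')}$ twisted by $D$. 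This class must be cancelled by $\delta_3$-corrections coming from $H^0(\cHom(Q_2,Q_3))=\Hom(V_1\oplus V_1,V_1\oplus V_1)$-type data. Those data are constrained to commute with multiplication, i.e.\ to be compatible with the block structure of $j$. The result is an explicit overdetermined linear system in the $3\times2$ matrices of \eqref{eq:Nq-matrix}, with eigenvalues drawn from $\rho_1,\rho_2,\rho_3$.

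\emph{Step 3.} The final step is to show that this system is inconsistent. I expect to do so by looking at a single eigenline, say the $\rho_2$-eigenvalue $\zeta^2$ line of $q_0$ against the $-\zeta$ line of $u_0$. There the compatibility reduces to a scalar identity among cube roots of unity, such as $\zeta^2\ne1$ or $1+\zeta+\zeta^2=0$ with $\zeta\neq1$, and this identity fails. The main obstacle is Step~2: keeping the Serre-duality and residue conventions consistent so that the $Q_2$-block computation is correct. Step~1 shows that all the nontriviality lives in that mixed block, and a sign error there would falsely kill the class. I would organize Step~2 as a Čech computation on the two-set cover by affine complements of a $\sigma$-orbit and of a disjoint $\sigma$-orbit, with the same residue normalization as in \eqref{eq:en-class}.
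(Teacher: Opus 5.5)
You have a genuine gap. Your Step 1 is correct and coincides with the paper's upper-left computation. The lower-left block forces $\gamma$ to be a global map $V_2\otimes\cO_C\to Q_3$, and your solutions are exactly $C_1=I_3$, $C_2=-\rho_2$ (note $-\zeta-\zeta^2=1$), unique by Lemma~\ref{lem:delta-injective}. As you observe, this part detects nothing, and your proposal stops exactly where the proof begins. Steps 2 and 3 are not carried out, and the route they sketch aims at the wrong block.

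In the lower-right block $\cHom(Q_2,Q_3)$, the equation $Z_A=\delta\Phi$ reads $Z^{\mathrm{lr}}_{A,ab}=\psi_b-\psi_a-\gamma_b e_{2,ab}$. Here $\psi$ is an arbitrary $0$-cochain: it is not a global section, and nothing constrains it to commute with multiplication. Passing to cohomology gives the necessary condition $[Z_A^{\mathrm{lr}}]=-[\gamma e_2]$ in $H^1(C,\cHom(Q_2,Q_3))\simeq M_2(H^1(C,K_C))\simeq M_2(\CC)$. So the composite of $\gamma$ with $e_2$ lands in this $Q_3$-valued group, not in a $V_3$-valued Hom. The ``$\delta_3$-corrections from $H^0(\cHom(Q_2,Q_3))$'' you describe are the shape of the upper-right block equation, which is never reached. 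Incidentally, $H^0(\cHom(Q_2,Q_3))\simeq V_1^{\oplus 4}$ is $8$-dimensional, not $\Hom(V_1\oplus V_1,V_1\oplus V_1)$. Your Step 2 also never uses the term $Z_A^{\mathrm{lr}}$ itself, i.e.\ the direct action of $A$ on the quotient summands. That term is half of the decisive comparison.

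The missing computation is short. On $Q_2\to Q_3$, multiplication by $s\in V_1$ is $\operatorname{diag}(s,\rho_1 s)$. With the Serre trace this gives $[Z_A^{\mathrm{lr}}]=\operatorname{diag}(\tr A,\tr(A\rho_1))=\operatorname{diag}(1,-1)$. The $(i,j)$ entry of $[\gamma e_2]$ is $\tr(C_iR_j)$ with $(R_1,R_2)=(I,\rho_2^{-1})$, which gives $\operatorname{diag}(3,-3)$ because $\tr\rho_2^{\pm1}=1+\zeta+\zeta^2=0$. Hence $[Z_A^{\mathrm{lr}}+\gamma e_2]=\operatorname{diag}(4,-4)\neq0$, contradicting the necessary identity, and the upper-right block is never needed.

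The decisive quantity is a trace over all of $V_2$: the entry $3=\tr I_3$ comes from summing the contributions of $q_0,q_1,q_2$. Your proposed reduction to a single eigenline (the $\zeta^2$-line of $q_0$ against the $-\zeta$-line of $u_0$) therefore cannot capture the obstruction. Moreover, the identity that fails is $1\neq -3$, not a relation among cube roots of unity. As written, your proposal contains no argument for the non-vanishing.
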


\begin{proof}
Choose a sufficiently fine \v Cech cover
$\mathfrak U=\{U_a\}$ of $C$ on which both extensions
\eqref{eq:En-extension} split.  Relative to local splittings
$$
 E_n|_{U_a}\simeq
 (V_n\otimes\cO_{U_a})\oplus Q_n|_{U_a},
$$
the comparison maps between these splittings are automorphisms of
the restrictions of the fixed bundles
$(V_n\otimes\cO_C)\oplus Q_n$ to $U_{ab}$.
Writing $\chi_{n,a}:(V_n\otimes\cO_{U_a})\oplus Q_n|_{U_a}
\xrightarrow{\sim}E_n|_{U_a}$ for these identifications, the comparison
$T_{n,ab}=\chi_{n,a}^{-1}\chi_{n,b}$ has matrix
\begin{equation}\label{eq:Tnab}
 T_{n,ab}=
 \begin{pmatrix}I_{V_n}&e_{n,ab}\\0&I_{Q_n}\end{pmatrix}.
\end{equation}
Here
$$
 e_{n,ab}\in
 \Gamma\!\left(U_{ab},
 \cHom(Q_n,V_n\otimes\cO_C)\right),
$$
and the \v Cech cocycle $(e_{n,ab})$ represents the extension class
$e_n$ in \eqref{eq:en-class-target}.

Choose a \v Cech cocycle
$\alpha=(\alpha_{ab})\in
Z^1(\mathfrak U,V_1\otimes\cO_C)$ representing the class $A$ under
\eqref{eq:End-identification}.  Applying the sheaf morphism $j$ gives
$$
 Z_A:=j(\alpha)\in
 Z^1\!\left(\mathfrak U,\cHom(E_2,E_3)\right).
$$
In the splittings \eqref{eq:Tnab}, each local homomorphism
$Z_{A,ab}:E_2|_{U_{ab}}\to E_3|_{U_{ab}}$ has the form
\begin{equation}\label{eq:ZA-blocks}
 Z_{A,ab}=
 \begin{pmatrix}
  Z_{A,ab}^{\mathrm{ul}}&Z_{A,ab}^{\mathrm{ur}}\\
  0&Z_{A,ab}^{\mathrm{lr}}
 \end{pmatrix}.
\end{equation}
The lower-left block is zero because multiplication preserves the constant
subbundles $V_2\otimes\cO_C\subset E_2$ and
$V_3\otimes\cO_C\subset E_3$.  More explicitly,
$$
\begin{aligned}
 Z_{A,ab}^{\mathrm{ul}}
 &\in\Gamma\!\left(
 U_{ab},\cHom(V_2,V_3)\otimes\cO_C\right),\\
 Z_{A,ab}^{\mathrm{lr}}
 &\in\Gamma\!\left(U_{ab},\cHom(Q_2,Q_3)\right).
\end{aligned}
$$

We must determine whether the equation
\begin{equation}\label{eq:solve-coboundary}
 Z_A=\delta\Phi
\end{equation}
has a solution
$\Phi\in C^0(\mathfrak U,\cHom(E_2,E_3))$; this is precisely the equation
whose solvability would say that $H^1(j)(A)=0$.  Write a putative local
zero-cochain as
\begin{equation}\label{eq:Phi-blocks}
 \Phi_a=
 \begin{pmatrix}
  \phi_a&\beta_a\\
  \gamma_a&\psi_a
 \end{pmatrix}.
\end{equation}
The four entries in \eqref{eq:Phi-blocks} are, respectively, local sections
of
$$
\begin{gathered}
 \cHom(V_2,V_3)\otimes\cO_C,\qquad
 \cHom(Q_2,V_3\otimes\cO_C),\\
 \cHom(V_2\otimes\cO_C,Q_3),\qquad
 \cHom(Q_2,Q_3).
\end{gathered}
$$
We use the coboundary convention
$$
 (\delta\Phi)_{ab}
 =T_{3,ab}\Phi_bT_{2,ab}^{-1}-\Phi_a.
$$
Multiplication of these block matrices gives
\begin{align}
 (\delta\Phi)_{ab}^{\mathrm{ll}}
 &=\gamma_b-\gamma_a,\label{eq:delta-ll}\\
 (\delta\Phi)_{ab}^{\mathrm{ul}}
 &=\phi_b-\phi_a+e_{3,ab}\gamma_b,\label{eq:delta-ul}\\
 (\delta\Phi)_{ab}^{\mathrm{lr}}
 &=\psi_b-\psi_a-\gamma_be_{2,ab}.\label{eq:delta-lr}
\end{align}
Since the lower-left block of $Z_A$ vanishes,
\eqref{eq:solve-coboundary} and \eqref{eq:delta-ll} force the
$\gamma_a$ to glue to a global morphism
$$
 \gamma:V_2\otimes\cO_C\longrightarrow Q_3.
$$
Taking cohomology in the upper-left and lower-right blocks yields the two
necessary identities
\begin{align}
 [Z_A^{\mathrm{ul}}]
 &=[e_3\gamma]
 &&\text{in }H^1\!\left(
 C,\cHom(V_2,V_3)\otimes\cO_C\right),
 \label{eq:block-identity-ul}\\
 [Z_A^{\mathrm{lr}}]
 &=-[\gamma e_2]
 &&\text{in }H^1\!\left(C,\cHom(Q_2,Q_3)\right).
 \label{eq:block-identity-lr}
\end{align}

We first solve the upper-left identity.  Using
$H^0(C,Q_3)\simeq V_2\oplus V_2$, write the global morphism $\gamma$ as
\begin{equation}\label{eq:gamma-C1C2}
 \gamma=(C_1,C_2):V_2\longrightarrow V_2\oplus V_2,
 \qquad C_1,C_2\in\End(V_2).
\end{equation}
Here $(C_1,C_2)$ denotes the induced map on global sections;
the bundle morphism $\gamma$ is obtained from it by evaluation into $Q_3$.
Under Serre duality, evaluation of
$[Z_A^{\mathrm{ul}}]$ on $q\in V_2$ is the map
$N_qA\in\Hom(V_1,V_3)$.  Evaluation of $[e_3\gamma]$ is the connecting
map \eqref{eq:delta3} applied to $(C_1q,C_2q)$.  Therefore
\eqref{eq:block-identity-ul} is equivalent to
\begin{equation}\label{eq:upper-equation}
 N_qA=N_{C_1q}+DN_{C_2q}
 \qquad\text{for every }q\in V_2.
\end{equation}
Equivariance of multiplication gives
$\rho_3N_q=N_{\rho_2q}\rho_1$, and therefore
$$
 DN_{\rho_2q}=N_q\rho_1^{-1}.
$$
Since $A=I-\rho_1^{-1}$ by \eqref{eq:A-matrix}, the pair
\begin{equation}\label{eq:C1C2}
 C_1=I_3,
 \qquad
 C_2=-\rho_2
\end{equation}
solves \eqref{eq:upper-equation}.  It is the unique solution because
$\delta_3$ is injective by Lemma~\ref{lem:delta-injective}.

It remains to test the lower-right identity
\eqref{eq:block-identity-lr}.  Using the differential of $\sigma$ to
identify the second summands, we have
$Q_2\simeq K_C\oplus K_C$ and
$Q_3\simeq K_C^2\oplus K_C^2$.  Hence
\begin{equation}\label{eq:lower-target}
 H^1\!\left(C,\cHom(Q_2,Q_3)\right)
 \simeq M_2\!\left(H^1(C,K_C)\right)
 \simeq M_2(\CC).
\end{equation}
The row and column order in this matrix space is the order
$(\Delta,\Gamma_\sigma)$ of the two quotient summands.

Use the Serre trace $H^1(C,K_C)\xrightarrow{\sim}\CC$ in
\eqref{eq:lower-target}.  Contraction of dual Serre bases with a canonical
extension tensor is then the ordinary matrix trace.  On the two
quotient summands, multiplication by $s\in V_1$ is
$\operatorname{diag}(s,\rho_1s)$.  Therefore the lower-right class of the
cocycle $Z_A$ is
\begin{equation}\label{eq:ZA-lr}
 [Z_A^{\mathrm{lr}}]
 =\operatorname{diag}\bigl(\tr A,\tr(A\rho_1)\bigr)
 =\operatorname{diag}(1,-1)
 \quad\text{in }M_2(\CC).
\end{equation}
On the other hand, write $R_1=I$ and $R_2=\rho_2^{-1}$ for the
two tensors of $e_2$.  For dual Serre bases $(q_k)$ and $(q_k^*)$, the
$(i,j)$ entry of $[\gamma e_2]$ is
$$
 \sum_k q_k^*(C_iR_jq_k)=\tr(C_iR_j).
$$
Thus composing \eqref{eq:C1C2} with the extension tensor
$e_2=(I,\rho_2^{-1})$ gives
\begin{equation}\label{eq:gamma-e2}
 [\gamma e_2]=
 \begin{pmatrix}
  \tr I_3&\tr\rho_2^{-1}\\
  -\tr\rho_2&-\tr I_3
 \end{pmatrix}
 =\operatorname{diag}(3,-3),
\end{equation}
because $1+\zeta+\zeta^2=0$.  Thus the residual lower-right class is
\begin{equation}\label{eq:final-matrix}
 [Z_A^{\mathrm{lr}}+\gamma e_2]
 =\begin{pmatrix}4&0\\0&-4\end{pmatrix}\neq0
 \quad\text{in }H^1\!\left(C,\cHom(Q_2,Q_3)\right).
\end{equation}
This contradicts the necessary identity
\eqref{eq:block-identity-lr}.  The lower-left, upper-left, and lower-right
blocks already give incompatible necessary conditions, so no equation
from the upper-right block is needed.  Consequently
\eqref{eq:solve-coboundary} has no solution, and
$H^1(j)(A)\neq0$.
\end{proof}

\begin{proof}[Proof of \cref{thm:main}]

By Propositions \ref{prop:explicit-nonzero} and \ref{prop:reduction}, the extension
\eqref{eq:jet-extension} obtained by restriction to $2\Delta_C$ is
non-split.  By Proposition \ref{prop:second-diagonal}, this extension is canonically
isomorphic to the categorical pushout of the invariant cross block
\eqref{eq:cross-block} along $\mu$.  It follows that the invariant cross
block is non-split, since a split extension would have split categorical
pushout.  By the normalized comparison in Lemma~\ref{lem:cross-block},
the $J$-invariant summand of \eqref{eq:DW} is therefore non-split.
The restriction-and-projection argument in the proof of
Proposition~\ref{prop:reduction} then shows that the full extension
\eqref{eq:DW} is non-split on the even spin family $T$.

All these constructions and extension classes are algebraic over
$\CC$.  Since $T$ is projective, coherent GAGA identifies
$H^1(T,G^\vee\otimes\bigwedge^2F)$ with the corresponding holomorphic
cohomology group and carries this extension class to its analytic class
\cite{SerreGAGA}.  Its non-vanishing therefore also rules out a
holomorphic projection.

Therefore
$$
 \kappa^*\omega_2(\M_4^+)\neq0
$$
by \cite[Theorem~3.2]{DonagiWittenAtiyah}.  Functoriality gives
$\omega_2(\M_4^+)\neq0$, so $\M_4^+$ is not projected.  A disjoint union
can be projected only if each component is; hence $\M_4$ is not projected.
\end{proof}

\end{document}